\newcommand{\cE}{{\mathcal E}}
\newcommand{\cF}{{\mathcal F}}
\newcommand{\cH}{{\mathcal H}}
\newcommand{\cI}{{\mathcal I}}
\newcommand{\cJ}{{\mathcal J}}
\newcommand{\cK}{{\mathcal K}}
\newcommand{\cL}{{\mathcal L}}
\newcommand{\cP}{{\mathcal P}}
\newcommand{\cS}{{\mathcal S}}
\newcommand{\cX}{{\mathcal X}}
\newcommand{\Om}{{\Omega}}
\newcommand{\om}{{\omega}}
\newcommand{\ve}{{\varepsilon}}
\newcommand{\del}{{\delta}}
\newcommand{\gam}{{\gamma}}
\newcommand{\Sig}{{\Sigma}}
\newcommand{\sig}{{\sigma}}
\newcommand{\al}{{\alpha}}
\newcommand{\be}{{\beta}}
\newcommand{\la}{{\lambda}}
\newcommand{\bbC}{{\mathbb C}}
\newcommand{\bbE}{{\mathbb E}}
\newcommand{\bbN}{{\mathbb N}}
\newcommand{\bbR}{{\mathbb R}}
\renewcommand{\epsilon}{\varepsilon}
\def\N{\mathbb{N}}
\begin{document}

\title*{Almost Sure Invariance Principle for Random Distance Expanding  Maps with a Nonuniform Decay of Correlations}
\titlerunning{Almost Sure Invariance Principle for Random Distance Expanding Maps}
% Use \titlerunning{Short Title} for an abbreviated version of
% your contribution title if the original one is too long
\author{Davor Dragi\v cevi\' c and Yeor Hafouta}
% Use \authorrunning{Short Title} for an abbreviated version of
% your contribution title if the original one is too long
\institute{Davor Dragi\v cevi\' c, Department of Mathematics, University of Rijeka, Rijeka, Croatia,  \email{ddragicevic@math.uniri.hr}
\and Yeor Hafouta, Department of Mathematics, The Ohio State University, Columbus OH USA,  \email{yeor.hafouta@mail.huji.ac.il}}
%
% Use the package "url.sty" to avoid
% problems with special characters
% used in your e-mail or web address
%
\maketitle

\abstract*{Each chapter should be preceded by an abstract (no more than 200 words) that summarizes the content. The abstract will appear \textit{online} at \url{www.SpringerLink.com} and be available with unrestricted access. This allows unregistered users to read the abstract as a teaser for the complete chapter.
Please use the 'starred' version of the \texttt{abstract} command for typesetting the text of the online abstracts (cf. source file of this chapter template \texttt{abstract}) and include them with the source files of your manuscript. Use the plain \texttt{abstract} command if the abstract is also to appear in the printed version of the book.}

\abstract{ We prove a quenched  almost sure invariance principle for certain classes of random distance expanding dynamical systems which do not necessarily exhibit  uniform decay of correlations.}

\section{Introduction}
\label{sec:1}

The aim of this note is to establish an almost sure invariance principle (ASIP) for certain classes of random dynamical systems.  More precisely, similarly to the setting introduced in~\cite{MSU}, 
the dynamics is formed by compositions \[ f_\omega^n:=f_{\sigma^{n-1}\omega}\circ \ldots \circ f_{\sigma\omega}\circ f_\omega,\,\om\in\Om \] of locally distance expanding maps  $f_\omega$ satisfying certain topological assumptions which are 
driven by an invertible, measure preserving transformation $\sigma$ on some probability space $(\Omega, \mathcal F, \mathbb P)$. Then, under suitable assumptions and for H\"older continuous observables 
 $\psi_\omega:X\to \mathbb{R}$, $\omega\in\Omega$ we establish a  quenched ASIP. Namely, we prove that for $\mathbb P$-a.e. $\omega \in \Omega$, the 
  random Birkhoff sums $\sum_{j=0}^{n-1}\psi_{\sig^j\om}\circ f_\om^j$ can be approximated in the strong sense by a sum of Gaussian independent random variables $\sum_{j=0}^{n-1}Z_j$ with the error being negligible compared to $n^{\frac12}$.  In comparison with the  previous results dealing with the ASIP for random or sequential dynamical systems,
the main novelty of our work  is that we do not require that our dynamics exhibits uniform (with respect to $\omega$) decay of correlations.

In a more  general setting and under suitable assumptions, Kifer proved in \cite{kifer} a central limit theorem (CLT)  and a law of iterated logarithm (LIL). As Kifer remarks, his arguments  (see \cite[Remark 4.1]{kifer}) also yield an ASIP  when there is an underlying random family of $\sig$-algebras which are sufficiently fast well mixing in an appropriate (random) sense (i.e. in the setup of~\cite[Theorem 2.1]{kifer}). In the context of random dynamics, Kifer's results can be applied  to random  expanding maps which admit a (random) symbolic representation. One of the main ingredients  in \cite{kifer} is a certain inducing argument, an approach that we also follow in the present paper. The main idea is that an ASIP for the original system will follow from an ASIP for a  suitably constructed induced system. 

For some classical work devoted to ASIP, we refer to~\cite{BP, PS}. In addition, 
we stress that there are quite a few works whose aim is to establish ASIP for deterministic dynamical systems. In this direction, we refer to the works of
 Field, Melbourne and T\"or\"ok \cite{FieldMelbourneTorok}, Melbourne and Nicol \cite{MN1, MN2}, and more  recently to Korepanov \cite{KO2, Korepanov}. In~\cite{GO},  Gou\"ezel  developed a new spectral technique for establishing ASIP, which was applied to certain classes of deterministic dynamical systems with the property that the corresponding transfer operator exhibits a spectral gap. 

Gou\"ezel's method was also used in \cite{ANV} to obtain  the annealed  ASIP for  certain classes of piecewise expanding  random dynamical systems.
In \cite{DFGTV1}  the authors proved for the first time (we recall that Kifer in~\cite{kifer} only briefly  commented that his methods also yield an ASIP) a quenched  ASIP for piecewise expanding  random dynamical systems, by invoking a recent ASIP for (reverse) martingales  due to Cuny and Merlevede \cite{CM} (which was also applied in many other deterministic and sequential setups; see for example~\cite{HNTV}). 
While the  type of maps $f_\om$ considered in  \cite{DFGTV1}  is more general than the ones considered in the present paper, in contrast to \cite{DFGTV1} in the present paper  we do  not assume a  uniform decay of correlations.  Moreover, the methods used in this paper can be extended to vector-valued observables $\psi_\om$ (see Remark~\ref{Remark}). On the other hand, it 
is unclear if the techniques in~\cite{DFGTV1} can be extended to the vector-valued case since the results in~\cite{CM} deal exclusively with the scalar-valued observables.  Finally, we mention our previous work~\cite{DH}, where we have obtained a quenched ASIP for certain classes of hyperbolic random dynamical systems. In addition, we 
have improved the main result from~\cite{DFGTV1}. However, the classes of dynamics we have considered again exhibit uniform decay of correlations.

Our techniques for establishing ASIP (besides the already mentioned inducing arguments), rely on a certain adaptation of 
the method of Gou\"ezel \cite{GO} which is of independent interest. Indeed, we first need to   modify Gou\"ezel's arguments and show that they yield an ASIP for  non-stationary sequences of random variables, which are not necessarily bounded in some $L^p$ space. 

We stress that our error term in ASIP  is of order $n^{1/4+O(1/p)}$ , where $p$ comes from certain $L^p$-regularity conditions we impose for the induced system. This is rather close to the  $n^{1/4}$ rate for deterministic uniformly expanding systems \cite{GO},  when $p\to\infty$ (although this rate was 
significantly improved by Korepanov~\cite{Korepanov}).

\section{Random distance expanding maps}

Let $(\Om,\cF, \mathbb P)$ be a complete probability space. Furthermore, let 
 $\sig:\Om\to\Om$ be an invertible   $\mathbb P$-preserving transformation  such that $(\Om,\cF, \mathbb P,\sig)$  is ergodic.  Moreover, 
let  $(\cX,\rho)$ be a compact metric space  normalized in size so that 
$\text{diam}\cX\leq 1$ together with the Borel $\sigma$-algebra $\mathcal B$, and
let  $\cE\subset\Om\times \cX$ be a  measurable set (with respect to the product $\sig$-algebra $\cF\times\mathcal B$) such that the fibers \[ \cE_\om=\{x\in \cX:\,(\om,x)\in\cE\},\quad\om\in\Om \] are compact. Hence (see \cite[Chapter III]{CV}),  it follows  that the map $\om\to\cE_\om$ is measurable with respect to the Borel $\sig$-algebra induced by the Hausdorff topology on the space $\cK(\cX)$ of compact subspaces of $\cX$. Moreover, the map $\omega \mapsto \rho(x,\cE_\om)$ is measurable for each $x\in \cX$.  
Finally, the projection map $\pi_\Om(\om,x)=\om$ is
measurable and it maps any $\cF\times\mathcal B$-measurable set to an
$\cF$-measurable set (see   \cite[Theorem III.23]{CV}). 

Let $f_\om \colon \cE_\om \to \cE_{\sig \om}$, $\omega \in \Omega$  be a family of surjective maps such that 
the map $(\om,x)\to f_\om (x)$ is measurable with respect to the $\sigma$-algebra $\cP$
which is the restriction of $\cF\times\mathcal B$ on $\cE$.
Consider the 
skew product transformation $F:\cE\to\cE$ given by 
\begin{equation}\label{Skew product}
F(\om,x)=(\sig\om,f_\om (x)).
\end{equation}
For $\omega \in \Omega$ and $n\in \N$, set
\[
f_\omega^n:=f_{\sigma^{n-1} \omega} \circ \ldots \circ f_\omega \colon \cE_\omega \to \cE_{\sigma^n \omega}.
\]
Let us now introduce several additional assumptions for the family $f_\omega$, $\omega \in \Omega$. More precisely, we require that:
\begin{itemize}
\item (\emph{topological exactness}) there exist a constant $\xi>0$ and a random variable $\omega \mapsto n_\om\in\bbN$ such that for
$\mathbb P$-a.e. $\omega \in \Omega$  and any $x\in \cE_\om$ we have that 
\begin{equation}\label{TopExRand}
f_\om^{n_\om}(B_\om(x,\xi))=
\cE_{\sig^{n_\om}\om},
\end{equation}
where 
$\,B_\om(x,r)$ denotes an open  ball in $\cE_\om$ centered in  $x$ with radius $r$;
\item (\emph{pairing property}) there exist random variables $\om \mapsto \gam_\om>1$ and $\om \mapsto D_\om\in\bbN$ such that for $\mathbb P$-a.e. $\omega \in \Omega$ and 
for any $x,x'\in\cE_{\sig\om}$ with $\rho(x,x')<\xi$ ($\xi$  comes from the previous assumption), 
we have that 
\begin{equation}\label{Pair1.0}
f_\om^{-1}(\{x\})=\{y_1,\ldots,y_k\}, \quad f_\om^{-1}(\{x'\})=\{y_1',\ldots,y_k'\},
\end{equation}
\[
k=k_{\om,x}=|f_\om^{-1}(\{x\})|\leq D_\om 
\]
and
\begin{equation}\label{Pair2.0}
\rho(y_i,y_i')\leq (\gam_\om)^{-1}\rho(x,x'), \quad \text{for $1\leq i\leq k$.}
\end{equation}
\end{itemize}
The above assumptions were considered in \cite{HK}, and they hold true in the setup of distance expanding maps considered in \cite{MSU}. We note that all the results stated in \cite{MSU} hold true under these assumptions (see \cite[Chapter 7]{MSU}) and not only under the assumptions from~\cite[Section 2]{MSU}.
For $\omega \in \Omega$ and  $n\in\bbN$,  set
\begin{equation}\label{gam om n D om n}
\gam_{\om,n}:=\prod_{i=0}^{n-1}\gam_{\sig^i\om}\,\,\text{ and }\,\,
D_{\om,n}:=\prod_{i=0}^{n-1}D_{\sig^i\om}.
\end{equation}
By induction, it follows from the pairing property  that for $\mathbb P$-a.e.  $\om \in \Omega$ and for any
 $x,x'\in\cE_{\sig^n\om}$ with $\rho(x,x')<\xi$, we have that 
\begin{equation}\label{Pair1}
(f_\om^n)^{-1}(\{x\})=\{y_1,\ldots ,y_k\}\,\,\text{ and }
\,\,(f_\om^n)^{-1}(\{x'\})=\{y_1',\ldots,y_k'\},
\end{equation}
where 
\[
k=k_{\om,x,n}=|(f_\om^n)^{-1}(\{x\})|\leq D_{\om,n},
\]
and 
\begin{equation}\label{Pair2}
\rho\big(f_\om^jy_i,f_\om^jy_i'\big)
\leq (\gam_{\sig^j\om,n-j})^{-1}\rho(x,x'), \ \text{for  $1\leq i\leq k$ and $0\leq j<n$.}
\end{equation}

Let $g:\cE\to\bbC$ be a measurable function. For any $\om\in\Om$,  consider the function $g_\om:=g(\om, \cdot)\colon \cE_\om \to \bbC$.
 For any 
$0<\al\leq 1$, set    
\[
v_{\al,\xi}(g_\om):=\inf\{R>0: |g_\om(x)-g_\om(x')|\leq R\rho^\al(x,x')\,\text{ if }\,
\rho(x,x')<\xi\},
\]
and let
\[
\|g_\om\|_{\al,\xi}=\|g_\om\|_\infty+v_{\al,\xi}(g_\om),
\]
where $\|\cdot\|_\infty$ denotes the supremum norm and $\rho^\al(x,x'):=\big(\rho(x,x')\big)^\al$. 
We emphasize that these norms are  $\cF$-measurable (see~\cite[p. 199]{HK}).

Let $\cH_\om^{\al,\xi}=(\cH_\om^{\al,\xi},\|\cdot\|_{\al,\xi})$ denote the space of all $h:\cE_\om\to\bbC$ such that $\|h\|_{\al,\xi}<\infty$.  Moreover, let $\cH_{\om,\bbR}^{\al,\xi}$ be  the space of all real-valued functions
in $\cH_\om^{\al,\xi}$. 

Take a random variable  $H\colon \Omega \to [1, \infty)$ such that \[\int_{\Omega}\ln H_\om \, d\mathbb P(\om)<\infty,\] where $H_\om:=H(\om)$. Moreover, 
 let $\cH^{\al,\xi}(H)$ be the set of all measurable functions $g:\cE\to\bbC$ satisfying
$v_{\al,\xi}(g_\om)\leq H_\om$ for $\om \in \Omega$. Furthermore, for $\om \in \Omega$ set
\[
\cH_\om^{\al,\xi}(H):=\{g\colon \cE_\om \to \mathbb C:  \text{$g$ measurable and $v_{\al,\xi}(g)\leq H_\om $}\}
\]
and
\begin{equation}\label{Q H def 1}
Q_\om(H)=\sum_{j=1}^\infty H_{\sig^{-j}\om}(\gam_{\sig^{-j}\om,j})^{-\al}.
\end{equation}
Since $\om \mapsto \ln H_\omega$ is integrable, we have (see \cite[Chapter 2]{MSU}) that $Q_\om(H)<\infty$ for $\mathbb P$-a.e. $\om \in \Omega$. The following simple distortion property is a direct consequence of
(\ref{Pair2}).
\begin{lemma}\label{Distor Lemm}
Take $\om \in \Omega$, $n\in \mathbb N$ and   $\varphi =(\varphi_0, \ldots, \varphi_{n-1})$, where $\varphi_i \in \cH_{\sigma^i \om}^{\al,\xi}(H)$ for $0\le i \le n-1$.
 Set
\[
\cS_n^\om \varphi:=\sum_{j=0}^{n-1}\varphi_j \circ f_\om^j.
\]
Furthermore, take  $x,x'\in\cE_{\sig^n\om}$ such that  $\rho(x,x')<\xi$ and let $y_i, y_i'$, $1\le i\le k$ be as in~(\ref{Pair1}). Then, 
for any $1\leq i\leq k$ we have that 
\begin{eqnarray}\label{S n distortion}
|\cS_n^\om \varphi(y_i)-\cS_n^\om \varphi(y_i')|
\leq\rho^\al(x,x') 
Q_{\sig^n\om}(H).\nonumber
\end{eqnarray}
\end{lemma}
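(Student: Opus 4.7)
The plan is a direct telescoping estimate using the Hölder bound on each $\varphi_j$ combined with the contraction estimate from the pairing property, followed by a reindexing of the sum to match the definition of $Q_\om(H)$.

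First, I would write
\[
\cS_n^\om \varphi(y_i)-\cS_n^\om \varphi(y_i')=\sum_{j=0}^{n-1}\bigl(\varphi_j(f_\om^j y_i)-\varphi_j(f_\om^j y_i')\bigr),
\]
and estimate each term separately. For fixed $j$, the points $f_\om^j y_i$ and $f_\om^j y_i'$ lie in $\cE_{\sig^j\om}$, and by (\ref{Pair2}) their distance is at most $(\gam_{\sig^j\om,n-j})^{-1}\rho(x,x')<\xi$ (since $\gam_{\sig^j\om,n-j}>1$ and $\rho(x,x')<\xi$). Hence the $\al$-Hölder control available through $v_{\al,\xi}(\varphi_j)\leq H_{\sig^j\om}$ may be invoked, yielding
\[
|\varphi_j(f_\om^j y_i)-\varphi_j(f_\om^j y_i')|\leq H_{\sig^j\om}(\gam_{\sig^j\om,n-j})^{-\al}\rho^\al(x,x').
\]

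Summing over $j=0,\ldots,n-1$ and factoring out $\rho^\al(x,x')$ gives
\[
|\cS_n^\om \varphi(y_i)-\cS_n^\om \varphi(y_i')|\leq \rho^\al(x,x')\sum_{j=0}^{n-1}H_{\sig^j\om}(\gam_{\sig^j\om,n-j})^{-\al}.
\]
The final step is to reindex by $m=n-j$, so that the sum becomes
\[
\sum_{m=1}^{n}H_{\sig^{n-m}\om}(\gam_{\sig^{n-m}\om,m})^{-\al}=\sum_{m=1}^{n}H_{\sig^{-m}(\sig^n\om)}(\gam_{\sig^{-m}(\sig^n\om),m})^{-\al},
\]
which is a partial sum, and hence upper bound, of $Q_{\sig^n\om}(H)$ as defined in (\ref{Q H def 1}). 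Combining the two displayed inequalities yields the claim.

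There is no real obstacle here; the only mild point to double-check is that the contraction factors $\gam_{\sig^j\om,n-j}>1$ so that one stays within the scale $\xi$ on which the Hölder seminorm controls oscillations, and that the reindexing exactly matches the definition of $Q_{\sig^n\om}(H)$ (with $\sig^n\om$ playing the role of the base point and the negative-index iterates running backwards along the orbit through $\om$).
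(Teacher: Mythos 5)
Your proof is correct and is exactly the straightforward telescoping argument the paper has in mind: indeed, the paper omits a written proof and merely notes that the lemma is ``a direct consequence of~(\ref{Pair2}),'' which is precisely what you supplied, including the crucial reindexing $m=n-j$ that recognizes the sum as a partial sum of $Q_{\sig^n\om}(H)$ and the observation that $\gam_{\sig^j\om,n-j}>1$ keeps the points within scale $\xi$.
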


\subsection{Transfer operators}
Let us take an observable $\psi \colon \cE \to \mathbb R$ such that $\psi \in  \cH^{\al,\xi}(H)$. We consider the associated random Birkhoff sums
\[
S_n^\om \psi=\sum_{i=0}^{n-1}\psi_{\sig^i\om}\circ f_\om^i, \quad \text{for $n\in \mathbb N$ and $\omega \in \Omega$.}
\]
Furthermore, suppose that $\phi \colon \cE \to \mathbb R$ also belongs to $\cH^{\al,\xi}(H)$. For $\om \in \Omega$, $z\in \mathbb C$ and $g \colon \cE_\omega \to \mathbb C$, we define
\begin{eqnarray}\label{TraOp}
\cL_\om^z g(x)=\sum_{y\in f_\om^{-1}(\{x\})}e^{\phi_\om(y)+z\psi_\om(y)}g(y).
\end{eqnarray}
It follows from~\cite[Theorem 5.4.1.]{HK} that $\cL_\om^z \colon \cH_\om^{\al,\xi} \to \cH_{\sig \om}^{\al,\xi}$ is a well-defined and bounded linear operator for each $\om \in \Omega$ and $z\in \mathbb C$. Moreover, the map $z\mapsto \cL_\om^z$ is analytic for each $\om \in \Omega$.

Let us denote $\cL_\om^0$ simply by $\cL_\om$. It follows from~\cite[Theorem 3.1.]{MSU} that for $\mathbb P$-a.e. $\omega \in \Omega$,  there exists a triplet $(\la_\om,h_\om,\nu_\om)$ consisting of a positive number $\la_\om>0$, a strictly positive function 
$h_\om\in\cH_\om^{\al,\xi}$ and a probability measure $\nu_\om$ on $\cE_\om$ so that 
\[
\cL_\om h_\om=\la_\om h_{\sig\om},\,(\cL_\om)^*\nu_{\sig\om}=\la_\om \nu_{\om}, \,\nu_\om(h_\om)=1, 
\]
and that maps $\om \mapsto \la_\om$, $\om \mapsto h_\om$ and $\om \mapsto \nu_\om$ are measurable. We can assume without any loss of generality that $\la_\om=1$ for $\mathbb P$-a.e. $\om \in \Omega$ (since otherwise we can replace $\cL_\om$ with $\cL_\om /\la_\om$). For $\mathbb P$-a.e. $\omega \in \Omega$, let $\mu_\om$ be a measure on $\cE_\om$ 
given by $d\mu_\om:=h_\om d\nu_\om$. We recall (see~\cite[Lemma 3.9]{MSU}) that these measures satisfy the so-called  {\em equivariant property}, i.e. we have that \begin{equation}\label{8:11} f^*_{\omega}\mu_{\omega}=\mu_{\sigma \omega}, \quad \text{for $\mathbb P$-a.e. $\om \in \Omega$.} \end{equation} Moreover, these measures give rise to a measure $\mu$ on $\Omega \times \cE$ with the property that 
for any   $A \in \mathcal F \times  \mathcal B$,  \[\mu(A)=\int_\Omega \mu_{\omega}(A_{\omega}) d\mathbb{P}(\omega),\] where $A_{\omega}=\{x\in \cE_\om; (\omega, x)\in A\}$. Then,  $\mu$ is invariant for the skew-product transformation $F$ given by~\eqref{Skew product}.  Moreover, $\mu$ is ergodic. 

For $\overline{t}=(t_0,\ldots ,t_{n-1})\in \mathbb R^n$, set 
\[
\cL_{\om}^{\overline{t},n}:=\cL^{it_{n-1}}_{\sig^{n-1}\om}\circ\ldots\circ\cL^{it_1}_{\sig\om}\circ\cL^{it_0}_{\om}.
\]
Moreover, let $\cL_\om^{n}:=\cL_\om^{\overline{0}, n}$, where $\overline{0}=(0, \ldots, 0)\in \mathbb R^n$. Note that  
\[
\|\cL_\om^{n}\textbf{1}\|_\infty\leq (\deg f_\om^n)\cdot  e^{\|S_n^\om \phi\|_\infty}
\leq D_{\om,n} e^{\|S_n^\om \phi\|_\infty}<\infty,
\]
where $\textbf{1}$ is the function taking constant value $1$ and
\[
\deg f_\omega^n:=\sup_{x\in \mathcal E_{\sigma^n \omega}} \lvert (f_\omega^n)^{-1}(\{x\}) \rvert.
\]
\begin{lemma}\label{L-Y-general}
For any $\mathbb P$-a.e. $\om \in \Om$ we have that for any $n\in\bbN$,  $T>0$, $ \overline{t}=(t_0,\ldots,t_{n-1})\in[-T,T]^n$
and $g\in \cH_\om^{\al,\xi}$,
\begin{equation*}
v_{\al,\xi}(\cL_\om^{\overline t,n}g)\leq \|\cL_\om^{n}\textbf{1}\|_\infty
\big(v_{\al,\xi}(g)(\gam_{\om,n})^{-\al}
+2Q_{\sig^n\om}(H)(1+T)\|g\|_\infty\big).
\end{equation*} 
Consequently, 
\begin{equation}\label{L.Y.-general}
\|\cL_\om^{\overline t,n}g\|_{\al,\xi}\leq \|\cL_\om^{n}\textbf{1}\|_\infty
\big(v_{\al,\xi}(g)(\gam_{\om,n})^{-\al}
+(1+2Q_{\sig^n\om}(H))(1+T)\|g\|_\infty\big).
\end{equation}
\end{lemma}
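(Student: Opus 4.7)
The plan is to unfold the iterated transfer operator explicitly, pair up the preimages of $x$ and $x'$ via the pairing property, and then decompose the summand differences using a three-term telescoping, bounding each term with the distortion lemma already proved.

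First I would verify by induction on $n$ the explicit formula
\[
\cL_\om^{\bar t,n}g(x)=\sum_{y\in (f_\om^n)^{-1}(\{x\})}e^{S_n^\om\phi(y)}\,e^{iT_n^\om(y)}\,g(y),
\quad T_n^\om(y):=\sum_{j=0}^{n-1}t_j\,\psi_{\sig^j\om}(f_\om^j y),
\]
and then, for $\rho(x,x')<\xi$, enumerate the preimages $\{y_1,\ldots,y_k\}$ of $x$ and $\{y_1',\ldots,y_k'\}$ of $x'$ as in~\eqref{Pair1}. Taking the difference summand-by-summand, I would apply the identity $A_1B_1C_1-A_2B_2C_2=A_1B_1(C_1-C_2)+A_1(B_1-B_2)C_2+(A_1-A_2)B_2C_2$ with $A=e^{S_n^\om\phi}$, $B=e^{iT_n^\om}$ and $C=g$.

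Each of the three resulting pieces is then estimated separately. The $g$-piece uses $v_{\al,\xi}(g)\rho^\al(y_i,y_i')\leq v_{\al,\xi}(g)(\gam_{\om,n})^{-\al}\rho^\al(x,x')$ from~\eqref{Pair2} (applied with $j=0$). The $B$-piece uses $|e^{ia}-e^{ib}|\leq|a-b|$ combined with Lemma~\ref{Distor Lemm} applied to the tuple $(t_0\psi_\om,\ldots,t_{n-1}\psi_{\sig^{n-1}\om})$, whose components lie in $\cH_{\sig^j\om}^{\al,\xi}(TH)$; this yields
\[
\bigl|T_n^\om(y_i)-T_n^\om(y_i')\bigr|\leq T\,Q_{\sig^n\om}(H)\,\rho^\al(x,x').
\]
The $A$-piece uses $|e^a-e^b|\leq\max(e^a,e^b)|a-b|$ together with Lemma~\ref{Distor Lemm} applied to $(\phi_\om,\ldots,\phi_{\sig^{n-1}\om})$, giving $Q_{\sig^n\om}(H)\rho^\al(x,x')$ for the exponent difference. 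Summing over $i$, the first two pieces are controlled using $\sum_i e^{S_n^\om\phi(y_i)}=\cL_\om^n\mathbf{1}(x)\leq\|\cL_\om^n\mathbf{1}\|_\infty$, while the third picks up $\cL_\om^n\mathbf{1}(x)+\cL_\om^n\mathbf{1}(x')\leq 2\|\cL_\om^n\mathbf{1}\|_\infty$. Collecting the three contributions produces the claimed bound on $v_{\al,\xi}(\cL_\om^{\bar t,n}g)$, with the factor $2(1+T)$ absorbing the $T$ from the $B$-piece and the $2$ from the $A$-piece.

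For the ``consequently'' assertion I would simply add $\|\cL_\om^{\bar t,n}g\|_\infty\leq\|g\|_\infty\|\cL_\om^n\mathbf{1}\|_\infty$, which is immediate from the triangle inequality and $|e^{it\psi}|=1$. The only real obstacle is the bookkeeping in the third piece, since $e^{S_n^\om\phi}$ has no $n$-independent bound; the trick of distributing the $\max$ across $\cL_\om^n\mathbf{1}(x)$ and $\cL_\om^n\mathbf{1}(x')$ separately is what allows the uniform $\|\cL_\om^n\mathbf{1}\|_\infty$ factor to appear cleanly in front.
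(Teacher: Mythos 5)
Your proposal is correct and follows essentially the same route as the paper: both unfold $\cL_\om^{\bar t,n}$ over paired preimages, telescope the summand difference into a $g$-piece, a unimodular $e^{iT_n^\om}$-piece, and an $e^{S_n^\om\phi}$-piece, and control each via~\eqref{Pair2} and Lemma~\ref{Distor Lemm}, with the $\cL_\om^n\mathbf{1}(x)+\cL_\om^n\mathbf{1}(x')\le 2\|\cL_\om^n\mathbf{1}\|_\infty$ trick handling the last piece. The paper packages the first two pieces as a single $I_1$ factored out of $e^{S_n^\om\phi(y_q)}$ and then splits it into $J_1,J_2$, which is just a two-step version of your one-step three-term telescoping; the arithmetic combining $T$ and $2$ into $2(1+T)$ is identical.
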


\begin{proof}
The proof is similar to the proof of~\cite[Lemma 5.6.1.]{HK}, but for reader's convenience all the details are given. The idea is to apply  Lemma~\ref{Distor Lemm} for $\varphi=(\varphi_0, \ldots, \varphi_{n-1})$ given by
\[ \varphi_j:=\phi_{\sigma^j \om}+it_j \psi_{\sigma^j \om}, \quad \text{for $0\le j \le n-1$.} \]
Set $A_n^\om=\sum_{j=0}^{n-1}t_j \psi_{\sigma^j \om}\circ f_\om^j$.
Firstly, by the definition of $\cL_\om^n$ we have
\begin{equation}\label{9:23}
\|\cL_\om^{\overline t,n}g\|_\infty\leq\|g\|_\infty\|\cL_\om^{n}\textbf1\|_\infty.
\end{equation}
In order to complete the proof of the lemma we need to approximate $v_{\al,\xi}(\cL_\om^{\overline t,n}g)$.  Let
$x,x'\in\cE_{\sigma^n\om}$ be such that
$\rho(x,x')<\xi$ and let $y_1,\ldots,y_k$ and $y_1',\ldots,y_k'$ be the points in
$\cE_\om$ satisfying (\ref{Pair1.0}) and (\ref{Pair2.0}). We can write
\[
\begin{split}
& \big|\cL_\om^{\overline t,n}g(x)-\cL_\om^{\overline t,n}g(x')\big|\\
&=\big|\sum_{q=1}^k\big(e^{S_n^\om \phi(y_q)+iA_n^\om(y_q)}g(y_q)-
e^{S_n^\om \phi(y_q')+iA_n^\om(y_q')}g(y_q')\big)\big|
\\
&\leq\sum_{q=1}^ke^{S_n^\om \phi(y_q)}
|e^{iA_n^\om(y_q)}g(y_q)-e^{iA_n^\om (y_q')}g(y_q')|\\
&\phantom{=}+
\sum_{q=1}^k|e^{iA_n^\om(y_q')}g(y_q')| \cdot 
|e^{S_n^\om \phi(y_q)}-e^{S_n^\om \phi(y_q')}| 
=:
I_1+I_2.
\end{split}
\]
In order to estimate $I_1$, observe that for any $1\leq q\leq k$,
\[
\begin{split}
&|e^{iA_n^\om (y_q)}g(y_q)-e^{iA_n^\om (y_q')}g(y_q')|\\
&\leq
|g(y_q)|\cdot|e^{iA_n^\om (y_q)}-e^{iA_n^\om (y_q')}|+
|g(y_q)-g(y_q')|
=:J_1+J_2.
\end{split}
\]
By the mean value theorem and then by Lemma~\ref{Distor Lemm},
\[
J_1\leq2T\|g\|_\infty  Q_{\sigma^n\om}(H)\rho^\al(x,x'),
\]
while by (\ref{Pair2}),
\[
J_2\leq v_{\al,\xi}(g)\rho^\al(y_q,y'_q)\leq
 v_{\al,\xi}(g)(\gam_{\om,n})^{-\al}\rho^\al(x,x').
\] 
It follows that
\begin{equation*}
I_1\leq \cL_{\om}^{n}\textbf1(x)
\big(2T\|g\|_\infty Q_{\sigma^n\om}(H)+v_{\al,\xi}(g)
(\gam_{\om,n})^{-\al}\big)\rho^\al(x,x').
\end{equation*}
Next, we  estimate $I_2$.  By the mean value theorem
and Lemma \ref{Distor Lemm},
\begin{eqnarray*}
|e^{S_n^\om \phi(y_q)}-
e^{S_n^\om \phi(y'_q)}|\leq Q_{\sigma^n\om}(H)\cdot
\max\{e^{S_n^\om \phi(y_q)},
e^{S_n^\om \phi(y'_q)}\}
\rho^\al(x,x')
\end{eqnarray*}
 and therefore
\[
\begin{split}
I_2 & \leq\|g\|_\infty(\cL_{\om}^{n}\textbf1(x)+\cL_{\om}^{n}\textbf1(x'))
Q_{\sigma^n\om}(H)\rho^\al(x,x')\\
&\leq
2\|g\|_\infty\|\cL_\om^n \textbf1\|_\infty     
Q_{\sigma^n\om}(H)\rho^\al(x,x'),                           
\end{split}
\]
yielding the first
statement of the lemma and~\eqref{L.Y.-general} follows from~\eqref{9:23}, together with the first statement.
\end{proof}

By Lemma~\ref{L-Y-general} together with the observation that $(\gam_{\om,n})^{-\al}\le 1$, we conclude   that  there exists a random variable $C \colon \Omega \to [1, \infty)$ such that for $\mathbb P$-a.e. $\omega \in \Omega$, $n\in \N$ and  for any $\overline{t}=(t_0,t_1,\ldots,t_{n-1})\in[-1,1]^n$, we have that 
\begin{equation}\label{552}
\lVert  \cL_\om^{\overline t,n} \rVert_{\al,\xi}  \leq  C(\sig^n \om) \| \cL_\om^{n}\textbf{1}\|_\infty, 
\end{equation}
where $\lVert  \cL_\om^{\overline t,n} \rVert_{\al,\xi}$ denotes  the operator norm of $\cL_\om^{\overline t,n}$  when considered as a linear operator from $\cH_{\omega}^{\alpha,\xi}$ to 
 $\cH_{\sigma^n\omega}^{\alpha,\xi}$.
Note that we can just take $C(\om)=4(1+Q_\omega)$.
For $\mathbb P$-a.e. $\omega \in \Omega$, we define $\hat{\cL}_\om \colon \cH_\om^{\al,\xi} \to \cH_{\sigma \om}^{\al,\xi}$ by
\[
\hat{\cL}_\om g=\cL_\om (gh_\om)/h_{\sigma \om}, \quad g\in  \cH_\om^{\al,\xi}.
\]
Moreover, for $n\in \mathbb N$, set \[\hat{\cL}_\om^{n}:=\hat{\cL}_{\sigma^{n-1} \om}\circ \ldots \circ  \hat{\cL}_{\sigma \om}\circ  \hat{\cL}_\om.\] Clearly, 
\[
\hat{\cL}_\om^{n}g=\cL_\om^n(gh_\om)/h_{\sigma^n \om}, \quad \text{for $g\in \cH_\om^{\al,\xi}$ and $n\in \mathbb N$.}
\]
We need the following result which is a direct consequence of~\cite[Lemma 3.18.]{MSU}.
\begin{lemma}\label{ds}
There exist $\lambda >0$ and a random variable $K\colon \Omega \to (0, \infty)$ such that 
\[
\lVert \hat{\cL}_\om^n g\rVert_\infty \leq \max(1,1/Q_\om) K(\sigma^n \omega)e^{-\lambda n} \lVert g\rVert_{\al,\xi}, 
\]
for $\mathbb P$-a.e. $\om \in \Omega$, $n\in \mathbb N$ and $g\in \cH_\om^{\al,\xi}$ such that $\int_{\cE_\om}g\, d\mu_\om=0$.
\end{lemma}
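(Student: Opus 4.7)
The plan is to deduce the estimate directly from [MSU, Lemma 3.18], which is the key exponential-decay statement for the normalized transfer cocycle $\hat{\cL}_\om$ in the random distance expanding setting. That reference provides, for $\mathbb P$-a.e. $\om$, a constant $\lambda > 0$ and a measurable function $\tilde K$ on $\Omega$ such that
\[
\lVert \hat{\cL}_\om^n g - \mu_\om(g)\mathbf{1}\rVert_\infty \leq \tilde K(\sigma^n \om) e^{-\lambda n} \lVert g\rVert_{\al,\xi}
\]
for every $g \in \cH_\om^{\al,\xi}$. The hypothesis $\mu_\om(g) = 0$ will then cause the constant term to vanish, leaving precisely the bound we want, up to rearranging the random factors into the form $\max(1, 1/Q_\om) K(\sigma^n \om)$.

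First I would record the basic properties of the normalized cocycle: $\hat{\cL}_\om \mathbf{1} = \mathbf{1}$ (a consequence of $\cL_\om h_\om = h_{\sigma\om}$ under the normalization $\la_\om \equiv 1$) and the duality $\int \hat{\cL}_\om g \, d\mu_{\sigma\om} = \int g \, d\mu_\om$ (a consequence of $(\cL_\om)^* \nu_{\sigma\om} = \nu_\om$ combined with $d\mu_\om = h_\om \, d\nu_\om$). The duality shows that $\hat{\cL}_\om$ sends the mean-zero subspace on the fibre over $\om$ into the mean-zero subspace on the fibre over $\sigma\om$, so the hypothesis $\mu_\om(g) = 0$ is preserved by iteration and the $\mu_\om(g)\mathbf{1}$ term in the displayed estimate disappears.

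Second, I would identify the factor $\max(1, 1/Q_\om)$ by tracking the random constants arising in [MSU, Lemma 3.18]. The MSU proof combines a Lasota-Yorke type inequality (of exactly the form established in Lemma \ref{L-Y-general}, where $Q_\om$ enters explicitly through the pairing property) with a contraction or coupling argument, producing constants with mixed dependence on both $\om$ and $\sigma^n\om$. To isolate the range-side dependence as a single measurable function $K(\sigma^n\om)$, one collects the residual domain-side factors into $\max(1, 1/Q_\om)$; this separation is possible because the initial-fibre quantity that appears is controlled by $1/Q_\om$ when $Q_\om < 1$ and by a harmless constant otherwise.

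The main obstacle I anticipate is precisely this bookkeeping of random constants: one must verify that the domain-side dependence in the bound of [MSU, Lemma 3.18] collapses to the claimed factor $\max(1, 1/Q_\om)$ and that the remaining random variable is genuinely a measurable function of $\sigma^n\om$ alone. The substantive contractive content (the spectral gap and the rate $\lambda$) is inherited from the cited lemma without modification; the work on our side is essentially notational reorganization plus an appeal to the preservation of the mean-zero property to eliminate the constant term.
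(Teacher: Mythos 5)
Your approach is the same as the paper's: the paper gives no independent argument for Lemma~\ref{ds} and simply states that it is a direct consequence of \cite[Lemma 3.18]{MSU}, which is exactly the reference you invoke, together with the observation that the mean-zero hypothesis kills the $\mu_\om(g)\mathbf 1$ term. Your extra remarks on the cocycle properties $\hat\cL_\om\mathbf 1=\mathbf 1$, the duality with $\mu_\om$, and the bookkeeping that produces the factor $\max(1,1/Q_\om)$ are plausible and consistent with the paper's normalization, but they go beyond what the paper records, which provides no further detail.
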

Applying Lemma~\ref{ds} with the function $g=1/h_\omega -1$, and taking into account that $\cL_\om^n h_\om=h_{\sigma^n\om}$ (since $\la_\om=1$),
it follows from~\eqref{552}   that  for $\mathbb P$-a.e. $\omega \in \Omega$, $n\in \N$ and  for any $\overline{t}=(t_0,t_1,\ldots,t_{n-1})\in[-1,1]^n$, 
\begin{equation}\label{552n}
\lVert  \cL_\om^{\overline t,n} \rVert_{\al,\xi}  \leq  (1+U(\omega))K(\sig^n\om) C'(\sig^n \om)
\end{equation}
where $C'(\omega)=C(\omega)\|h_\om\|_\infty$ and $U(\omega)=\max(1,1/Q_\om)\cdot(1+\|1/h_\om\|_{\al,\xi})$.

\section{A refined version of Gou\"ezel's theorem}
In this section we present a  more general version of Gou\"ezel's almost sure invariance principle for non-stationary processes~\cite[Theorem 1.3.]{GO}. This result will than be used in the next section to obtain the almost sure invariance principle for random distance expanding maps. 

Let $(A_1, A_2, \ldots )$ be an $\mathbb R$-valued process on some probability space $(\Omega, \mathcal F, \mathbb P)$. 
We first recall the  condition that we denote (following~\cite{GO}) by (H): there exist $\ve_0>0$ and $C,c>0$ such that for any $n,m>0$, $b_1<b_2< \ldots <b_{n+m+k}$, $k>0$ and $t_1,\ldots, t_{n+m}\in\bbR$ with $|t_j|\leq\ve_0$, we have that
\begin{eqnarray*}
\Big|\bbE\big(e^{i\sum_{j=1}^nt_j(\sum_{\ell=b_j}^{b_{j+1}-1}A_\ell)+i\sum_{j=n+1}^{n+m}t_j(\sum_{\ell=b_j+k}^{b_{j+1}+k-1}A_\ell)}\big)\\
-\bbE\big(e^{i\sum_{j=1}^nt_j(\sum_{\ell=b_j}^{b_{j+1}-1}A_\ell)}\big)\cdot\bbE\big(e^{i\sum_{j=n+1}^{n+m}t_j(\sum_{\ell=b_j+k}^{b_{j+1}+k-1}A_\ell)}\big)\Big|\\\leq C(1+\max|b_{j+1}-b_j|)^{C(n+m)}e^{-ck}.
\end{eqnarray*}

\begin{theorem}\label{Gouzel Thm}
Suppose that $(A_1,A_2,\ldots)$ is an  $\mathbb R$-valued centered process on the probability space $(\Omega, \mathcal F, \mathbb P)$ that
 satisfies~(H). Furthermore, assume that:
\begin{itemize}
\item there exist  $u>0$ and $L\in\bbN$ such  that for any $n, m\in \bbN$, $m\geq L$ we have that
\begin{equation}\label{Go1}
Var \bigg{(}\sum_{j=n+1}^{n+m}A_j \bigg{)}\geq um;
\end{equation}

\item there exist constants $p\geq 6$ and $a,C>0$ such  that for any $n\in \bbN$ we have
\begin{equation}\label{Go2}
\|A_n\|_{L^p}\leq a n^{\frac 1p}.
\end{equation}
In addition, for any  $n, m\in \N$ the finite sequence $(A_i/(n+m)^{1/p})_{n+1\le i \le n+m}$ also satisfies condition~(H) with the same constants $\ve_0$, $C$ and $c$.
\end{itemize}
Then for any $\delta>0$,   there exists a coupling between $(A_j)$ and  a  sequence $(B_j)$  of independent centered normal random variables such that 
\begin{equation}\label{8:44}
\left|\sum_{j=1}^{n}(A_j-B_j)\right|=o(n^{a_p+\delta}) \quad a.s., 
\end{equation}
where \[ a_p=\frac{p}{4(p-1)}+\frac 1p.\] Moreover, there exists a constant $C>0$ such that for any $n\in \N$,
\begin{equation}\label{Var est}
\Big\|\sum_{j=1}^n A_j\Big\|_{L^2}-Cn^{a_p+\delta}\leq \Big\|\sum_{j=1}^n B_j\Big\|_{L^2}\leq \Big\|\sum_{j=1}^n A_j\Big\|_{L^2}+Cn^{a_p+\delta}.
\end{equation} 
Finally, there exists a coupling between  $(A_j)$ and a standard Brownian motion $(W_t)_{t\ge 0}$ such that 
\[
\left|\sum_{j=1}^{n}A_j-W_{\sigma_n^2}\right|=o(n^{\frac12 a_p+\frac14+\delta}) \quad a.s., 
\]
where
\[
\sigma_n=\Big\|\sum_{j=1}^n A_j\Big\|_{L^2}.
\]
\end{theorem}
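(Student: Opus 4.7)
The plan is to adapt Gou\"ezel's characteristic-function proof in \cite{GO}, with modifications to accommodate the moment bound (\ref{Go2}) growing like $n^{1/p}$ rather than being uniformly bounded. The enabling observation is that the rescaling hypothesis provides exactly what is needed to localize: for any truncation level $N$, the finite process $\tilde A_j := A_j/N^{1/p}$, $1\le j\le N$, is bounded in $L^p$ by $a$, satisfies (H) with the original constants $\epsilon_0, C, c$, and has block variances at least $uN^{-2/p}m$ by (\ref{Go1}). Thus on each window of length $N$, after rescaling, we are essentially in Gou\"ezel's original setting, the only modification being an $N^{-2/p}$ factor in the variance lower bound that has to be tracked carefully.

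The proof proceeds in three main steps. First, one adapts Gou\"ezel's characteristic-function estimate to show that for block sums $\tilde S_{m,n} := \sum_{n<j\le n+m}\tilde A_j$, the characteristic function $\bbE(e^{it\tilde S_{m,n}})$ is well-approximated by $\exp(-\frac12\tilde\sigma_{m,n}^2 t^2)$ on $|t|\le\epsilon_0$, using (H) to decouple well-separated sub-blocks and the $L^p$-bound with a Taylor expansion to compare each remaining factor to a Gaussian characteristic function. Second, following Gou\"ezel's argument, one partitions $\{1,\ldots,n\}$ into blocks of common length $m=m(n)$ and constructs, block by block via a Berkes--Philipp conditional quantile coupling, independent centered Gaussians $B^{(k)}$ whose variance matches the corresponding block sum of $(A_j)$; the $L^\infty$-coupling error per block is controlled by the preceding characteristic-function bound. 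Third, one optimizes $m(n)$ to balance the approximation and coupling errors, tracking the $N^{1/p}$ factor throughout, to arrive at the rate $o(n^{a_p+\delta})$ with $a_p=\frac{p}{4(p-1)}+\frac{1}{p}$; the $\frac{1}{p}$ correction is precisely the contribution of the rescaling over Gou\"ezel's original $\frac{p}{4(p-1)}$ exponent.

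The variance comparison (\ref{Var est}) then follows from the $L^2$ estimate implicit in the block coupling: each Gaussian $B^{(k)}$ matches the variance of its block $A$-sum by construction, and cross-block covariances of $(A_j)$ decay geometrically by (H), so summing yields the stated $n^{a_p+\delta}$ discrepancy between $\|\sum A_j\|_{L^2}$ and $\|\sum B_j\|_{L^2}$. For the Brownian coupling, embed the independent Gaussians $(B_j)$ into a standard Brownian motion $W$ by setting $\sum_{j\le n}B_j = W_{\tau_n}$, where $\tau_n = \sum_{j\le n}\mathrm{Var}(B_j) = \|\sum_{j\le n}B_j\|_{L^2}^2$; then (\ref{Var est}) combined with $\sigma_n = O(\sqrt{n})$ gives $|\tau_n-\sigma_n^2| = O(n^{a_p+1/2+\delta})$, and the global Brownian modulus of continuity $O(\sqrt{|\tau_n-\sigma_n^2|\log n})$ contributes $O(n^{a_p/2+1/4+\delta})$. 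Combined with (\ref{8:44}), and since $a_p<1/2$ for $p\ge 6$ (so $a_p/2+1/4\ge a_p$), this yields the claimed rate $o(n^{a_p/2+1/4+\delta})$.

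The main obstacle will be the bookkeeping: although (H) is preserved under the rescaling by hypothesis, the variance lower bound (\ref{Go1}) acquires the $N^{-2/p}$ factor and the range of frequencies on which the Gaussian approximation is valid shifts correspondingly. Verifying that the block-length optimization in Gou\"ezel's argument remains stable under this rescaling, and that the final error exponent is exactly $a_p=\frac{p}{4(p-1)}+\frac{1}{p}$ rather than something slightly worse, is the most delicate part of the proof.
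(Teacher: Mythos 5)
Your proposal correctly identifies the key modification -- rescale by $N^{1/p}$ so that the finite process $(A_j/N^{1/p})_{j\le N}$ is $L^p$-bounded, and use the hypothesis that (H) survives this rescaling with uniform constants -- and this is indeed exactly the leverage the paper's proof uses (via the auxiliary sequences $(A_i/(n+m)^{1/p})$). The Brownian-embedding paragraph at the end is also essentially correct and matches the paper's use of Hanson--Russo increment bounds together with Berkes--Philipp and Strassen--Dudley.

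However, the ASIP argument you sketch for the rescaled process is not Gou\"ezel's, and as written it would not produce the rate $n^{a_p+\delta}$. You describe (a) a Taylor-expansion/Berry--Esseen step comparing the characteristic function of a block sum to a Gaussian characteristic function, (b) blocks of common length $m(n)$, and (c) a block-by-block Berkes--Philipp conditional quantile coupling to independent Gaussians controlled by (a). None of these is a step in Gou\"ezel's proof. What Gou\"ezel actually does -- and what the paper reproduces lemma by lemma -- is: partition $[2^n,2^{n+1})$ into $2^{\lfloor\beta n\rfloor}$ ``big blocks'' separated by hierarchically nested gaps of dyadically varying lengths; use condition (H) to show, in Prokhorov metric, that the big-block sums are nearly \emph{mutually independent} (versions of [GO, Lemma 5.2, Lemma 5.4], not a comparison to a Gaussian); and only then apply Zaitsev's KMT-type strong approximation theorem (Proposition~\ref{Cor3} in the paper) to the now-independent block sums. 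Zaitsev's theorem is the ingredient that yields the near-$n^{1/4}$ rate; a plain Berry--Esseen plus quantile-coupling block scheme of the type you outline would land well above that. Moreover, your proposal is silent on the small-block contribution $\mathcal J$, which Gou\"ezel handles with a separate $L^2$ estimate ([GO, Lemma 5.9]) and which in the present growing-$L^p$ setting the paper treats via a Gal--Koksma strong law (Lemma~\ref{p4}, from M\'oricz). The paper's proof also needs a M\'oricz maximal inequality (Lemma~\ref{be1}) for the version of [GO, Lemma 5.8], and an $L^2$-coupling bound from [DH, Proposition 9] for the variance estimate~\eqref{Var est}; these replacements at the precise points where Gou\"ezel's uniform $L^p$ bound entered are the substance of the adaptation, and they are missing from your outline. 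In short, the rescaling insight is right, but the underlying ASIP mechanism you propose is a different (and weaker) one.
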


\begin{remark}\label{Remark}
The above result (together with its proof) is similar to~\cite[Theorem 1.3]{GO}. However, we stress that~\cite[Theorem 1.3]{GO} requires that the process $(A_1, A_2, \ldots)$ is bounded in $L^p$, while the above Theorem~\ref{Gouzel Thm} works under the assumption that~\eqref{Go2} holds. Consequently, the estimate for the error term in~\eqref{8:44}
is different from that in~\cite[Theorem 1.3]{GO}.

Note also that our condition  \eqref{Go1} replaces condition (1.3) in~\cite[Theorem 1.3]{GO}. This, of course, makes it impossible to get a precise formula for the variance of the approximating Gaussian random variables $\sum_{j=1}^{n}B_j$, as in \cite{GO}. 
However,  in our context we have the estimate~(\ref{Var est}). Observe that (\ref{Var est}) together with (\ref{Go1}) ensures that 
\[
\lim_{n\to \infty} \frac{\Big\|\sum_{j=1}^n B_j\Big\|_{L^2}}{\Big\|\sum_{j=1}^n A_j\Big\|_{L^2} }=1.
\]
 Therefore, Theorem \ref{Gouzel Thm} yields a corresponding almost sure version of the CLT for the sequence $\frac{1}{a_n}\sum_{j=1}^n A_j$, where $a_n=\|\sum_{j=1}^n A_j\|_{L^2}$. As we have mentioned,  a precise formula for the variance of the approximating Gaussian random variables in the context of~\cite[Theorem 1.3]{GO}
 was obtained in~\cite[Lemma 5.7]{GO}. Hence,  in our modification of the proof of ~\cite[Theorem 1.3]{GO} we will not need an appropriate version of \cite[Lemma 5.7]{GO} (and instead we will prove (\ref{Var est}) directly).

We  also note  that our modification of the arguments in \cite{GO} also yields  a certain convergence rate for $p\in(4,6)$, but in order to keep our exposition as simple as possible we have formulated the results only under the assumption that $p\geq 6$.

Finally, we remark that like in~\cite{GO} we can consider processes taking values in $\mathbb R^d$ and that Theorem~\ref{Gouzel Thm} holds in this case also. We prefer to work with processes in $\mathbb R$ to keep our exposition as simple as possible. 

\end{remark}

\begin{proof}[Proof of Theorem~\ref{Gouzel Thm}]
We follow step by step the proof of~\cite[Theorem 1.3]{GO} by making necessary adjustments.
  Firstly, applying ~\cite[Proposition 4.1]{GO} with the finite sequence $(A_i/(n+m)^{1/p})_{n+1\le i\le n+m}$, we get that for each $\eta>0$ there exists $C>0$ such that 
\begin{equation}\label{Modified bound}
\left\|\sum_{j=n+1}^{n+m}A_j\right\|_{L^{p-\eta}}\leq Cm^{\frac 12}(n+m)^{1/p}, \quad \text{for $m, n\ge 0$.}
\end{equation}
We note that although~\cite[Proposition 4.1]{GO} was  formulated for an  infinite sequence,  the proof for a  finite sequence proceeds by using the same arguments. 
We consider the so-called  big and small blocks as  introduced in~\cite[p.1659]{GO}. Fix $\beta\in(0,1)$ and $\ve\in(0,1-\beta)$. Furthermore, let  $f=f(n)=\lfloor \beta n \rfloor$. Then,  Gou\"ezel decomposes $[2^n,2^{n+1})$ into a union of $F=2^f$ intervals $(I_{n,j})_{0\leq j<F}$ of the same length, and $F$ gaps $(J_{n,j})_{0\leq j<F}$ between them. In other words, we have 
\[
[2^n, 2^{n+1})=J_{n,0}\cup I_{n, 0}\cup J_{n,1}\cup I_{n,1}\cup \ldots \cup J_{n, F-1}\cup I_{N, F-1}.
\]
Let us outline the construction of this decomposition. For $1\le j<F$, we write $j$ in the form $j=\sum_{k=0}^{f-1} \alpha_k(j)2^k$ with $\alpha_k \in \{0, 1\}$. We then take the smallest $r$ with the property that $\alpha_r(j)\neq 0$ and take $2^{\lfloor \epsilon n\rfloor}2^r$ to be the length of $J_{n,j}$. 
In addition, the length of $J_{n,0}$ is $2^{\lfloor \epsilon n\rfloor}2^f$. Finally, the length of each interval $I_{n,j}$ is $2^{n-f}-(f+2)2^{\lfloor \epsilon n\rfloor-1}$.

In addition,  we recall some notations from~\cite{GO} which we will also use.  We define a  partial order on  $\{(n,j):\,n\in\bbN,\,0\leq j<F(n)\}$ by writing  $(n,j)\prec (n',j')$ if the interval $I_{n,j}$ is to the left of $I_{n',j'}$. Observe that a sequence  $((n_k,j_k))_k$ tends to infinity if and only if $n_k\to\infty$. Moreover, let 
\[
X_{n,j}:=\sum_{\ell\in I_{n,j}}A_\ell
\]
and \[\cI:=\bigcup_{n,j}I_{n,j} \quad  \text{and} \quad \cJ:=\bigcup_{n,j}J_{n,j}.\] The rest of the proof will be divided (following again~\cite{GO}) into six steps.

\emph{First step:} We first prove the following version of~\cite[Proposition 5.1]{GO}.
\begin{proposition}\label{bb}
There exists a coupling between $(X_{n,j})$ and $(Y_{n,j})$ such that, almost surely, when $(n,j)$ tends to infinity, 
\[
\left|\sum_{(n',j')\prec (n,j)}X_{n',j'}-Y_{n',j'}\right|=o(2^{(\beta+\ve)n/2}).
\]
Here, $(Y_{n,j})$ is a family of independent random variables such that  $Y_{n,j}$ and  $X_{n,j}$ are equally distributed. 
\end{proposition}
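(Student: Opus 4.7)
The natural approach is to imitate Gouëzel's proof of his Proposition 5.1, replacing the uniform $L^p$ bound by the moment estimate \eqref{Modified bound}. The overall plan is an inductive coupling across the partial order $\prec$: having coupled $(X_{n',j'})_{(n',j')\prec(n,j)}$ with the corresponding independent block $(Y_{n',j'})$, one adjoins $X_{n,j}$ and couples it with an independent copy $Y_{n,j}$ having the same marginal, controlling the new discrepancy in terms of how far the conditional law of $X_{n,j}$ given the past is from the true marginal.

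First I would quantify the almost-independence of $X_{n,j}$ from the past. Each gap $J_{n,j}$ has length at least $2^{\lfloor \epsilon n\rfloor}$, so condition (H) applied with the "times" coming from the big blocks in $[2^n,2^{n+1})$ and all previous levels gives that the characteristic function of the joint distribution of $(X_{n',j'})_{(n',j')\preceq(n,j)}$ differs from the product of characteristic functions by an amount of order $(1+\max|I_{n,j}|)^{C\#}\, e^{-c\, 2^{\lfloor \epsilon n\rfloor}}$ on $[-\eps_0,\eps_0]^\#$, where $\#$ is the number of blocks seen so far. Since the block lengths are $\le 2^{n-f}$ and the number of blocks up to level $n$ is at most $\sum_{k\le n}2^{\lfloor\beta k\rfloor}=O(2^{\beta n})$, this Fourier distance is super-exponentially small in $n$; in particular it dominates every polynomial correction one picks up later.

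Next I would apply a Berkes--Philipp / Strassen-type coupling lemma, in the version used in Gouëzel's paper (his Lemma 3.3), on a slightly enriched probability space to construct $Y_{n,j}$ with the prescribed marginal so that $|X_{n,j}-Y_{n,j}|$ is small with high probability, with a tail estimate combining the Fourier distance above with the size of the random variable. Here is where \eqref{Modified bound} enters: since $\lvert I_{n,j}\rvert\le 2^{n-f}$ and the indices lie in $[2^n,2^{n+1})$, we get, for some $\eta>0$ small,
\[
\|X_{n,j}\|_{L^{p-\eta}}\le C\, 2^{(n-f)/2}\cdot 2^{(n+1)/p}.
\]
Truncating $X_{n,j}$ at the level $T_n:=2^{(\beta+\epsilon)n/2}$ and using Markov's inequality controls the contribution of the tail by $(2^{(n-f)/2+n/p}/T_n)^{p-\eta}$, which, recalling $f=\lfloor\beta n\rfloor$ and since $p\ge 6$, is summable in $n$ with a big margin of room, once $\epsilon$ is chosen relative to $1/p$.

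Finally, I would assemble the per-step estimates. On each dyadic level $n$ there are $F=2^f$ big blocks, so the cumulative coupling error added at level $n$ is at most $2^f$ times the per-block error, which by the previous two steps is bounded by $2^{(\beta+\epsilon)n/2}$ multiplied by a sequence whose tail is summable in $n$. A Borel--Cantelli argument then shows that $\bigl|\sum_{(n',j')\prec(n,j)}(X_{n',j'}-Y_{n',j'})\bigr|=o(2^{(\beta+\epsilon)n/2})$ almost surely as $(n,j)\to\infty$, which is the claim. The main obstacle will be bookkeeping in this last step: the $L^p$ bound is not uniform but grows like $(n+m)^{1/p}$, so one must verify carefully that the tail exponent $p-\eta$, the block density $2^{f}$, and the allowed discrepancy $2^{(\beta+\epsilon)n/2}$ fit together to produce summable errors for every $\beta\in(0,1)$ and $\epsilon\in(0,1-\beta)$; the condition $p\ge 6$ is used precisely to give the slack needed after the $1/p$ contribution to the exponents is absorbed.
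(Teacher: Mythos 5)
Your plan has a genuine quantitative gap at the very first step, which is where the whole structure of Gou\"ezel's (and the paper's) argument actually lives.

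You propose to adjoin $X_{n,j}$ to the already--coupled past one big block at a time, invoking condition~(H) across the gap $J_{n,j}$ of length $2^{\lfloor\epsilon n\rfloor}$. The resulting (H) bound is, as you write,
\[
\bigl(1+\max|I_{n,j}|\bigr)^{C\#}\,e^{-c\,2^{\lfloor\epsilon n\rfloor}},\qquad \#\approx 2^{\beta n},\quad \max|I_{n,j}|\approx 2^{(1-\beta)n}.
\]
But this is not ``super-exponentially small''; it is super-exponentially \emph{large}. The polynomial prefactor grows like $\exp\bigl(Cn\,2^{\beta n}\bigr)$, while the exponential gain is only $\exp\bigl(-c\,2^{\epsilon n}\bigr)$. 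In the regime relevant here one always has $\epsilon<1-\beta<\beta$ (indeed $\beta=p/(2p-2)>1/2$ when $p\ge6$), so $2^{\beta n}\gg 2^{\epsilon n}$ and the prefactor swamps the exponential. Concretely: for every odd $j$ the gap preceding $I_{n,j}$ has length exactly $2^{\lfloor\epsilon n\rfloor}$, so the naive block-by-block coupling has no usable Fourier estimate for roughly half the blocks at each level.

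This is precisely why the paper (following GO) splits the decoupling into two estimates with carefully graded gap sizes. First, Lemma~\ref{Lemma 5.2} decouples the whole level--$n$ tuple $\tilde X_n$ from all previous levels at once, using the single large gap $J_{n,0}$ of length $\approx 2^{(\beta+\epsilon)n}$; there the polynomial factor $\exp(Cn\,2^{\beta n})$ \emph{is} dominated by $\exp(-c\,2^{(\beta+\epsilon)n})$. Second, Lemma~\ref{Lem 5.4} decouples the $2^f$ blocks within level $n$ along a binary tree of depth $f$: a merge at tree depth $i$ couples two sub-tuples of $2^{i-1}$ blocks across a gap of length $\approx 2^{\epsilon n+i}$, so the polynomial factor $\exp(Cn\,2^i)$ is dominated by $\exp(-c\,2^{\epsilon n+i})$ for every $i$. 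The gap lengths $2^{\lfloor\epsilon n\rfloor}2^r$ are graded exactly so that the count of variables and the gap grow at the same rate $2^i$, which is what lets the exponential win. Your inductive per-block scheme has no analogue of this balancing and cannot be repaired while using the minimal gaps.

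Two secondary remarks. (1) You gesture at Berkes--Philipp but omit the smoothing device: the passage from a characteristic-function estimate on a cube $[-\epsilon_0,\epsilon_0]^{\#}$ to a Prokhorov bound requires adding the independent $V_{n,j}$'s (GO Proposition~3.8) so that the characteristic functions are supported in that cube, and then applying GO Lemma~3.5 with an appropriate threshold. (2) Your truncation at $T_n=2^{(\beta+\epsilon)n/2}$ is not what is needed there; the paper truncates at the super-exponential scale $e^{2^{\epsilon n/2}}$ (matching the $T'$ in GO Lemma~3.5), and the only modification forced by the non-uniform $L^p$ bound is a Markov inequality giving $\mathbb{P}(|\tilde X_{m,j}|\ge e^{2^{\epsilon n/2}})\le e^{-2^{\epsilon n/2}}\,C\,2^{n(1+1/p)}$, which is trivially summable. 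Thus the $L^p$ growth enters far more mildly than your plan suggests; it never needs to fight the Fourier exponent, because the hierarchical decoupling has already made that exponent enormous.
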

Before we outline the proof of Proposition~\ref{bb}, we will first introduce some preparatory material. Let $\tilde{X}_{n,j}=X_{n,j}+V_{n,j}$, where the $V_{n,j}$'s are independent copies of the random variable $V$ constructed in~\cite[Proposition 3.8]{GO}, which are independent of everything else (enlarging  our probability space if necessary). 
Write $X_n=(X_{n,j})_{0\leq j< F(n)}$ and $\tilde X_n=(\tilde X_{n,j})_{0\leq j< F(n)}$. Then,  we have the following version of~\cite[Lemma 5.2]{GO}.
\begin{lemma}\label{Lemma 5.2}
Let $\tilde Q_n$ be a random variable distributed like $\tilde X_n$, but independent of $(\tilde X_1,\ldots,\tilde X_{n-1})$. We have
\begin{equation}\label{(5.3)}
\pi\big((\tilde X_1,\ldots ,\tilde X_{n-1},\tilde X_n),
(\tilde X_1,\ldots ,\tilde X_{n-1},\tilde Q_n)\big)\leq C4^{-n},
\end{equation}
where  $\pi(\cdot,\cdot)$ is the Prokhorov metric (see~\cite[Definition 3.3]{GO}) and $C>0$ is some constant not depending on $n$.
\end{lemma}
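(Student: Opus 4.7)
The plan is to follow the argument in \cite[Lemma 5.2]{GO} essentially verbatim, with the extra care that our process is only $L^p$-bounded (rather than uniformly bounded) and that condition~(H) has the same structure. The two ingredients are: (i) the decoupling smoother $V$ of \cite[Proposition 3.8]{GO}, which converts characteristic-function closeness into a Prokhorov-distance bound, and (ii) condition~(H) applied to the gap $J_{n,0}$ separating level $n$ from all earlier levels.

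First, I would apply \cite[Proposition 3.8]{GO} to the pair of random vectors $(\tilde X_1,\ldots,\tilde X_{n-1},\tilde X_n)$ and $(\tilde X_1,\ldots,\tilde X_{n-1},\tilde Q_n)$. Since the coordinates $\tilde X_{k,j}=X_{k,j}+V_{k,j}$ have been convolved with the smoothing variable, the Prokhorov distance in \eqref{(5.3)} is bounded (up to a harmless multiplicative constant) by the supremum, over frequency vectors $(t_{k,j})$ with $|t_{k,j}|\le \epsilon_0$, of the difference
\[
\Big|\bbE\bigl(e^{i\sum_{k\le n,j} t_{k,j}\tilde X_{k,j}}\bigr)-\bbE\bigl(e^{i\sum_{k\le n-1,j}t_{k,j}\tilde X_{k,j}}\bigr)\,\bbE\bigl(e^{i\sum_{j}t_{n,j}\tilde X_{n,j}}\bigr)\Big|,
\]
plus contributions from the rapid decay of the characteristic function of $V$ (which can be made $O(4^{-n})$ by choosing the parameters in \cite[Proposition 3.8]{GO} appropriately). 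The independence of the $V_{k,j}$'s allows this to be rewritten as the analogous difference for the unsmoothed $X_{k,j}$'s, multiplied by the (bounded) product of the characteristic functions of the $V_{k,j}$'s.

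Next, I would realize each $X_{k,j}=\sum_{\ell\in I_{k,j}}A_\ell$ as a weighted block sum in the sense of~(H), treating the gap intervals $J_{k,j}$ as blocks with zero frequency. The key geometric input is that the single largest gap lying between the levels $\{1,\ldots,n-1\}$ and level $n$ is $J_{n,0}$, which has length $2^{\lfloor\epsilon n\rfloor}2^{f(n)}\asymp 2^{(\beta+\epsilon)n}$. Applying~(H) with this $k\asymp 2^{(\beta+\epsilon)n}$ gives the upper bound
\[
C\bigl(1+\max|b_{j+1}-b_j|\bigr)^{C(n+m)}e^{-ck}\le C\,(1+2^{n+1})^{C\cdot 2^{\beta n+1}}\exp\bigl(-c\,2^{(\beta+\epsilon)n}\bigr),
\]
since $\max|b_{j+1}-b_j|\le 2^{n+1}$ and the total number of blocks used is at most $\sum_{k\le n}2F(k)\le C'\,2^{\beta n}$. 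The polynomial factor equals $\exp\bigl(O(n\,2^{\beta n})\bigr)$, which is dominated by the stretched-exponential decay $\exp(-c\,2^{(\beta+\epsilon)n})$ for every $\epsilon>0$, giving a bound much smaller than $4^{-n}$.

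Combining the two estimates yields the stated $C\cdot 4^{-n}$ bound on the Prokhorov distance. The main obstacle I anticipate is the bookkeeping for \cite[Proposition 3.8]{GO}: that proposition involves truncation, integration over a bounded frequency domain, and error terms depending on moments of the underlying variables; one has to verify that the $L^{p-\eta}$ control \eqref{Modified bound} (rather than uniform boundedness) is still enough to absorb those error terms while keeping the constants independent of $n$, because we only need Prokhorov bounds of order $4^{-n}$ and the polynomial-in-$n$ moment growth is negligible against that. Once this bookkeeping is in place, the rest of the argument is a direct transcription of \cite{GO} with the gap estimate above.
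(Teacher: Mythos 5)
Your overall strategy matches the paper's: use the smoothing variable $V$ from~\cite[Proposition~3.8]{GO}, apply condition~(H) with the gap $J_{n,0}$ of length $\asymp 2^{(\beta+\varepsilon)n}$ to bound the characteristic-function difference, and then pass to the Prokhorov metric. The one thing you defer as ``bookkeeping'' is in fact the entire content of the paper's modification, so you should be aware of exactly how it goes. The conversion is done via~\cite[Lemma~3.5]{GO} (not directly Proposition~3.8), applied with $N=D\le C2^{\beta n}$ and truncation level $T'=e^{2^{\varepsilon n/2}}$; this produces, in addition to the characteristic-function term $e^{CD\,2^{\varepsilon n/2}}e^{-c2^{\beta n+\varepsilon n}}$, a tail sum $\sum_{m\le n}\sum_{j<F(m)}\mathbb P(|\tilde X_{m,j}|\ge e^{2^{\varepsilon n/2}})$. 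The paper then simply applies Markov's inequality: $\mathbb P(|\tilde X_{m,j}|\ge e^{2^{\varepsilon n/2}})\le e^{-2^{\varepsilon n/2}}\,\mathbb E|\tilde X_{m,j}|$, and uses $\|A_\ell\|_{L^p}\le a\ell^{1/p}$ to get $\mathbb E|\tilde X_{m,j}|\le C2^{n+n/p}$. Note this moment bound is \emph{exponential} in $n$, not ``polynomial in $n$'' as you write, but since the truncation level $e^{2^{\varepsilon n/2}}$ is doubly exponential, the product $C2^{\beta n}\cdot 2^{n+n/p}\cdot e^{-2^{\varepsilon n/2}}$ still decays super-exponentially and is far smaller than $4^{-n}$. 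So your anticipated obstacle is real but elementary; you just need to execute the Markov step rather than appeal to $L^{p-\eta}$ control of block sums, which is not what enters at this stage (it is used later, in Lemma~\ref{28} and Lemma~\ref{Lemma 5.8}).
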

\begin{proof}[Proof of Lemma~\ref{Lemma 5.2}]
The proof is carried out by repeating the proof of~\cite[Lemma 5.2]{GO} with one slight modification. 
For reader's convenience we provide a complete proof.

The random process $(X_1,\ldots ,X_n)$ takes its values in $\mathbb R^D$, where $D=\sum_{m=1}^n F(m)\leq C2^{\be n}$. Moreover, each component in $\mathbb R$ of this process is one of the $X_{n,j}$, hence it is a sum of at most $2^n$ consecutive variables $A_\ell$. On  the other hand, the interval $J_{n,0}$ is a gap between $(X_j)_{j<n}$ and $X_n$, and its length $k$ is $C^{\pm 1}2^{\ve n+\be n}$. Let $\phi$ and $\gamma$ denote the respective characteristic functions of $(X_1,\ldots,X_{n-1},X_n)$ and $(X_1,\ldots ,X_{n-1},Q_n)$, where $Q_n$ is distributed like $X_n$ and is independent of $(X_1,\ldots ,X_{n-1})$. The assumption (H) ensures that for Fourier parameters $t_{m,j}$ all bounded by $\ve_0$, we have
\[
|\phi-\gamma|\leq C(1+2^n)^{CD}e^{-ck}\leq Ce^{-c'2^{\be n+\ve n}},
\]
if $n$ is large enough. Let $\tilde\phi$ and $\tilde\gamma$ be the characteristic functions of, respectively, $(\tilde X_1,\ldots,\tilde X_n)$ and $(\tilde X_1,\ldots,\tilde X_{n-1},\tilde Q_n)$: they are obtained by multiplying $\phi$ and $\gamma$ by the characteristic function of $V$ is each variable. Since this function is supported in $\{|t|\leq\ve_0\}$, we obtain, in particular, that 
\[
|\tilde\phi-\tilde\gamma|\leq Ce^{-c2^{\be n+\ve n}}.
\]

We then use~\cite[Lemma 3.5.]{GO} with $N=D$ and $T'=e^{2^{\ve n/2}}$ to obtain that
\[
\begin{split}
&\pi((\tilde X_1,\ldots ,\tilde X_n),(\tilde X_1,\ldots ,\tilde X_{n-1},\tilde Q_n))\\
&\leq \sum_{m\leq n}\,\sum_{j<F(m)}\mathbb P(|\tilde{X}_{m,j}|\geq e^{2^{\ve n/2}})+e^{CD2^{\ve n/2}}e^{-c2^{\be n+\ve n}}.
\end{split}
\]
So far our arguments were identical to those in the proof of~\cite[Lemma 5.2]{GO}. In the rest of the proof we will introduce the above mentioned modification of the arguments from~\cite{GO}.
Using the Markov inequality,  we obtain that
\[
\mathbb P(|\tilde X_{m,j}|\geq e^{2^{\ve n/2}})\leq 
e^{-2^{\ve n/2}}\bbE |\tilde X_{m,j}|.
\]
However, since $\|A_l\|_{L^p}\leq al^{1/p}$ for every $l\in \N$ (and for some constant $a>0$), we have that  $\bbE |\tilde X_{m,j}|\leq C2^{n+\frac np}$. Summing the resulting upper bounds for $\mathbb P(|\tilde X_{m,j}|\geq e^{2^{\ve n/2}})$,  we obtain the desired result.
\end{proof}

The following result follows from Lemma \ref{Lemma 5.2} exactly in the same way as~\cite[Corollary 5.3]{GO} follows from~\cite[Lemma 5.2]{GO}.

\begin{corollary}\label{Cor 5.3}
Let $\tilde R_n=(\tilde R_{n,j})_{j<F(n)}$ be distributed like $\tilde X_n$ and such that the $\tilde R_n$ are independent of each other. Then there exist $C>0$ and a coupling between $(\tilde X_1,\tilde X_2,\ldots)$ and $(\tilde R_1,\tilde R_2,\ldots)$ such that for all $(n,j)$,
\[
\mathbb P(|\tilde X_{n,j}-\tilde R_{n,j}|\geq C4^{-n})\leq C4^{-n}.
\]
\end{corollary}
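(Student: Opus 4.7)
\medskip
\noindent\textbf{Proof plan.} The plan is to mimic the passage from \cite[Lemma 5.2]{GO} to \cite[Corollary 5.3]{GO}, which goes through essentially verbatim once Lemma~\ref{Lemma 5.2} is in hand. The key analytic input is a Strassen/Dudley type fact (compare \cite[Lemma 3.4]{GO}): on a Polish space, whenever two probability measures satisfy $\pi(\mu,\nu)\le\varepsilon$, there exists a coupling of random variables with these laws in which they differ by more than $\varepsilon$ with probability at most $\varepsilon$.

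First I would construct the coupling inductively in $n$. Suppose that on a common probability space I already have $\tilde X_1,\tilde X_2,\ldots$ together with mutually independent blocks $\tilde R_1,\ldots,\tilde R_{n-1}$, each $\tilde R_m$ distributed like $\tilde X_m$. To adjoin $\tilde R_n$, apply Lemma~\ref{Lemma 5.2} to obtain
\[
\pi\bigl((\tilde X_1,\ldots,\tilde X_n),(\tilde X_1,\ldots,\tilde X_{n-1},\tilde Q_n)\bigr)\le C4^{-n},
\]
with $\tilde Q_n$ independent of the past and distributed as $\tilde X_n$. By the Strassen bound invoked above there is a joint law of two random vectors whose marginals are the two vectors appearing above and in which the first $n-1$ coordinates of the two marginals can be identified exactly (this is legitimate since both blocks already have the identical law of $(\tilde X_1,\ldots,\tilde X_{n-1})$), while the last coordinates differ by more than $C4^{-n}$ with probability at most $C4^{-n}$. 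I then set $\tilde R_n:=\tilde Q_n$, extending the ambient space by an independent randomisation so that $\tilde R_n$ is independent of the previously built $\tilde R_1,\ldots,\tilde R_{n-1}$.

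Next I would record that under the coupling just assembled $\tilde R_n$ has the same law as $\tilde X_n$ and
\[
\|\tilde X_n-\tilde R_n\|_\infty\le C4^{-n}
\]
outside a set of probability at most $C4^{-n}$. In particular, for every $j<F(n)$ the componentwise bound $\mathbb P(|\tilde X_{n,j}-\tilde R_{n,j}|\ge C4^{-n})\le C4^{-n}$ follows, which is the desired estimate. A Kolmogorov extension then packages these finite-stage couplings into a single coupling of the full sequences $(\tilde X_n)_{n\ge 1}$ and $(\tilde R_n)_{n\ge 1}$.

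The main obstacle I anticipate is the bookkeeping of independence: at each inductive step one must guarantee that the freshly introduced $\tilde R_n$ is independent not only of $(\tilde X_1,\ldots,\tilde X_{n-1})$, which is what Lemma~\ref{Lemma 5.2} and Strassen deliver directly, but also of the previously constructed blocks $\tilde R_1,\ldots,\tilde R_{n-1}$. This is handled by a standard disintegration argument together with an auxiliary independent randomisation: once a regular conditional version of the Strassen coupling given $(\tilde X_1,\ldots,\tilde X_{n-1})$ is chosen, the extra randomness used in its realisation is drawn independently of all randomness introduced at earlier stages, which preserves the required mutual independence of the $\tilde R_m$'s.
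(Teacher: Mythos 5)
Your overall plan --- build the $\tilde R_n$ block by block via the Prokhorov estimate of Lemma~\ref{Lemma 5.2} plus a Strassen--Dudley type coupling --- is exactly what the paper intends, since the paper delegates the proof of this corollary wholesale to Gou\"ezel's Corollary~5.3. However, there is a genuine gap in the key step. You assert that, because the two random vectors $(\tilde X_1,\ldots,\tilde X_n)$ and $(\tilde X_1,\ldots,\tilde X_{n-1},\tilde Q_n)$ share the same law on the first $n-1$ coordinates, the unconditional Strassen--Dudley bound allows you to realize a coupling in which those first $n-1$ coordinates are \emph{identified exactly} while the last differs by more than $C4^{-n}$ with probability at most $C4^{-n}$. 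This does not follow from the plain Strassen--Dudley theorem and is in fact false in general: take $E_1=[0,1]$ with Lebesgue measure, $E_2=\{0,1\}$, $P$ with $Y=\mathbf 1_{X\in I}$ for $I$ a union of intervals of tiny length $\epsilon$, and $Q=\mathrm{Leb}\otimes\mathrm{Ber}(1/2)$; then $\pi(P,Q)\le\epsilon$ (one may shift $X$ by $\epsilon$), yet any coupling with $X=X'$ a.s.\ has $\mathbb P(Y\ne Y')=1/2$. So an unconditional Prokhorov bound simply does not imply that the conditional laws given the first marginal are close, which is what you would need to identify the first coordinates.

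What actually makes Gou\"ezel's argument work is not the plain Strassen--Dudley statement you quote but the conditional coupling lemma of Berkes--Philipp/Dudley--Philipp type, which is precisely \cite[Lemma~3.4]{GO} (you cite it but paraphrase it as the unconditional Strassen theorem). That lemma says that if $\pi\big(\mathcal L(W,Y),\,\mathcal L(W)\otimes\nu\big)\le\alpha$, then on an extension of the ambient probability space one can construct a random variable $Y'\sim\nu$ that is independent of the \emph{entire} original $\sigma$-field (not just of $W$) with $\mathbb P\big(d(Y,Y')>6\sqrt\alpha\big)\le 6\sqrt\alpha$. Applying this with $W=(\tilde X_1,\ldots,\tilde X_{n-1})$, $Y=\tilde X_n$ and $\nu=\mathcal L(\tilde X_n)$, and using that $\tilde R_1,\ldots,\tilde R_{n-1}$ are measurable with respect to the stage-$(n-1)$ $\sigma$-field, gives both the independence of $\tilde R_n$ from the previous blocks and the closeness to $\tilde X_n$ at once; the square-root loss is harmless since the bound coming out of Lemma~\ref{Lemma 5.2} is actually super-exponentially small, much better than $4^{-n}$. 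So you should replace the ``identify the first $n-1$ coordinates'' step by a direct appeal to that conditional coupling lemma, at which point the inductive construction closes without any further disintegration argument.
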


We also need the following version of~\cite[Lemma 5.4]{GO}.
\begin{lemma}\label{Lem 5.4}
For any $n\in\bbN$, we have
\[
\pi\Big((\tilde R_{n,j})_{0\leq j<F(n)},(\tilde Y_{n,j})_{0\leq j<F(n)}\Big)\leq C4^{-n}
\]
where $\tilde Y_{n,j}=Y_{n,j}+V_{n,j}$.
\end{lemma}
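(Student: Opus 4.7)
The proof closely follows the structure of Lemma~\ref{Lemma 5.2}. Let $\phi$ and $\gamma$ denote the characteristic functions of $(X_{n,j})_{0 \le j < F(n)}$ and of $(Y_{n,j})_{0 \le j < F(n)}$ respectively; the latter factorizes as a product since the $Y_{n,j}$'s are independent with the same marginals as the $X_{n,j}$'s. The main step is to establish the pointwise estimate
\[
|\phi(t) - \gamma(t)| \le C e^{-c 2^{\ve n}} \quad \text{for $t=(t_0,\ldots,t_{F(n)-1})$ with $|t_j| \le \ve_0$.}
\]
To obtain this, I would peel the $F(n)$ big blocks apart by iterated applications of condition (H), following a divide-and-conquer strategy along the dyadic structure of $\{0,\ldots,F(n)-1\}$. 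At level $k$ of the bisection, one performs $2^{k-1}$ peels, each using a gap of length at least $2^{\lfloor \ve n \rfloor}\cdot 2^{f(n)-k}$ and involving $F(n)/2^{k-1}$ blocks of length at most $2^n$. Condition (H) then contributes at level $k$ an error of at most
\[
2^{k-1}(1+2^n)^{CF(n)/2^{k-1}} e^{-c 2^{\lfloor \ve n \rfloor + f(n) - k}}.
\]
Because $\ve,\beta>0$, for large $n$ the exponential factor dominates the polynomial one at every level, and summing across the $O(n)$ levels still produces a bound of the form $C e^{-c' 2^{\ve n}}$.

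Once this bound is in hand, the remainder of the argument imitates the end of Lemma~\ref{Lemma 5.2}. Since the $V_{n,j}$'s are mutually independent and independent of everything else, and the characteristic function of $V$ is supported in $\{|t|\le \ve_0\}$, multiplying $\phi$ and $\gamma$ by this characteristic function in each coordinate produces the characteristic functions $\tilde\phi$ and $\tilde\gamma$ of $(\tilde R_{n,j})_j$ and $(\tilde Y_{n,j})_j$, and preserves the estimate $|\tilde\phi-\tilde\gamma|\le C e^{-c' 2^{\ve n}}$ globally on $\bbR^{F(n)}$. Applying \cite[Lemma 3.5]{GO} with $N=F(n)$ and $T'=e^{2^{\ve n/2}}$ then yields
\[
\pi\big((\tilde R_{n,j})_j,(\tilde Y_{n,j})_j\big) \le \sum_{j<F(n)}\mathbb P\big(|\tilde X_{n,j}| \ge e^{2^{\ve n/2}}\big) + e^{CF(n) 2^{\ve n/2}} e^{-c' 2^{\ve n}}.
\]
The second term is super-exponentially small in $n$. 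For the tail probabilities I would invoke Markov's inequality together with the bound $\bbE|\tilde X_{n,j}| \le C 2^{n(1+1/p)}$, which follows from $\|A_\ell\|_{L^p}\le a\ell^{1/p}$ and $|I_{n,j}|\le 2^n$; summing over $j$ gives a contribution of order $C 2^{n(1+\beta+1/p)} e^{-2^{\ve n/2}}$, which is far smaller than $4^{-n}$ for large $n$.

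The hard part is the parameter-chase in the first step: orchestrating the peeling order so that the polynomial blowup $(1+2^n)^{CF(n)}$ coming from (H) is dominated by the exponential decay from the gap used at every bisection level. The divide-and-conquer strategy is what makes this go through, since it matches the largest available gaps with the largest groups, so that at deeper levels---where the gaps shrink to their minimum length $2^{\lfloor \ve n \rfloor}$---the number of blocks on each side of the peel has also shrunk, keeping the polynomial factor manageable.
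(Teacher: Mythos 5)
Your divide-and-conquer bisection matches the dyadic structure that the paper (following Gou\"ezel) exploits, but there is a genuine gap at the point where you convert the characteristic-function bound into a Prokhorov bound. You propose to first sum the errors from all levels of the bisection to get a single character-function estimate $|\tilde\phi-\tilde\gamma|\le Ce^{-c'2^{\ve n}}$, and only then apply \cite[Lemma 3.5]{GO} \emph{once}, with the full dimension $N=F(n)$. This gives a term of the form $e^{CF(n)2^{\ve n/2}}e^{-c'2^{\ve n}}$, and since $F(n)\approx 2^{\beta n}$ the exponent is of order $2^{(\beta+\ve/2)n}-2^{\ve n}$. With the paper's choice $\beta=p/(2p-2)\ge 3/5$ (and indeed for any $\beta>\ve/2$, which is forced here since $\ve<1-\beta$), we have $\beta+\ve/2>\ve$, so this term \emph{blows up} rather than being ``super-exponentially small'' as you assert. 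The characteristic-function bound you obtained is far too weak to survive a single global application of \cite[Lemma 3.5]{GO}.

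The remedy, and what the paper actually does, is to never combine the levels at the characteristic-function stage. One introduces the interpolating family $\tilde Y^i$, $0\le i\le f$, where $\tilde Y^i$ groups the $F$ coordinates into independent blocks of size $2^i$ each drawn from the correct marginal distribution, so that $\tilde Y^f$ has the law of $\tilde X_n$ (hence of $\tilde R_n$) and $\tilde Y^0$ has the law of $\tilde Y_n$. Then $\pi(\tilde R_n,\tilde Y_n)\le\sum_{i=1}^f\pi(\tilde Y^i,\tilde Y^{i-1})$, and by subadditivity of the Prokhorov metric over independent blocks (Gou\"ezel's (5.7)) each $\pi(\tilde Y^i,\tilde Y^{i-1})$ is controlled by a sum of Prokhorov distances between $2^i$-dimensional vectors. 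Applying \cite[Lemma 3.5]{GO} to each of these with $N=2^i$ (not $N=F$) and $T'=e^{2^{\ve n/2}}$ pits $e^{C2^{i+\ve n/2}}$ against the decoupling error $e^{-c'2^{\ve n+i}}$; here the exponent is $2^i\bigl(C2^{\ve n/2}-c'2^{\ve n}\bigr)$, which is negative for large $n$ uniformly in $i$. Summing the resulting $O(F(n))$ terms, each of size $Ce^{-2^{\delta n}}$, gives a bound absorbed by $4^{-n}$. The crucial point, which your argument misses, is that the conversion to the Prokhorov metric must be done at each level separately, with the dimension and gap length matched, so that the dimension-dependent blow-up $e^{CN\,T'}$ is beaten by the gap-dependent decay at every scale.
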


\begin{proof}[Proof of Lemma~\ref{Lem 5.4}]
We follow the proof of~\cite[Lemma 5.4]{GO}. We define $\tilde Y_{n,j}^i$ for $0\le i \le f$ as follows: for $0\le k <2^{f-i}$, the random vector $\tilde{\mathcal Y}_{n,k}^i:=(\tilde Y_{n, j}^i)_{k2^i \le j< (k+1)2^i}$ is distributed as $(\tilde X_{n,j})_{k2^i \le j< (k+1)2^i}$, and 
$\tilde{\mathcal Y}_{n,k}^i$ is independent of $\tilde{\mathcal Y}_{n,k'}^i$ when $k\neq k'$. 
Set $\tilde Y^i=(\tilde Y_{n,j}^i)_{0\le j<F}$, for $0\le i \le f$. By~\cite[(5.7)]{GO}, we have that
\begin{equation}\label{aux1}
\pi (\tilde Y^i, \tilde Y^{i-1})\le \sum_{k=0}^{2^{f-i}-1}\pi (\tilde{\mathcal Y}_{n,k}^i, (\tilde{\mathcal Y}_{n, 2k}^{i-1}, \tilde{\mathcal Y}_{n,2k+1}^{i-1})),
\end{equation}
for $1\le i\le f$. As in the proof of~\cite[Lemma 5.4]{GO}, as a consequence of the condition (H), the difference between the characteristic functions of $\tilde{\mathcal Y}_{n,k}^i$ and $(\tilde{\mathcal Y}_{n, 2k}^{i-1}, \tilde{\mathcal Y}_{n,2k+1}^{i-1})$ is at most
$Ce^{-c'2^{\epsilon n+i}}$ for $n$ large enough. Hence, by applying~\cite[Lemma 3.5]{GO} with $N=2^i$ and $T'=e^{2^{\epsilon n/2}}$ we obtain that
\[
\begin{split}
& \pi (\tilde{\mathcal Y}_{n,k}^i, (\tilde{\mathcal Y}_{n, 2k}^{i-1}, \tilde{\mathcal Y}_{n,2k+1}^{i-1})) \\
& \le \sum_{j=k2^i}^{(k+1)2^i-1}\mathbb P(\lvert \tilde X_{n,j}\rvert \ge e^{2^{\epsilon n/2}}) +Ce^{2^{\epsilon n/2+i}}e^{-c'2^{\epsilon n+i}}.
\end{split}
\]
By estimating $\mathbb P(\lvert \tilde X_{n,j}\rvert \ge e^{2^{\epsilon n/2}})$ as in the proof of Lemma~\ref{Lemma 5.2},  we conclude that
\begin{equation}\label{aux2}
\pi (\tilde{\mathcal Y}_{n,k}^i, (\tilde{\mathcal Y}_{n, 2k}^{i-1}, \tilde{\mathcal Y}_{n,2k+1}^{i-1})) \le Ce^{-2^{\delta n}},
\end{equation}
for some $\delta >0$.  The conclusion of the lemma now follows from~\eqref{aux1} and~\eqref{aux2} by summing over $i$ and noting that the  process $(\tilde Y_{n,j}^f)_{0\le j<F}$ coincides with $(\tilde R_{n,j})_{0\le j<F}$ and that $(\tilde Y_{n,j}^0)_{0\le j<F}$ coincides with $(\tilde Y_{n,j})_{0\le j<F}$.
\end{proof}

Finally, relying on Corollary \ref{Cor 5.3} and  Lemma \ref{Lem 5.4} , the proof of Proposition \ref{bb} is completed exactly as in \cite{GO}.
\qed

\emph{Second step:} We now establish the version of~\cite[Lemma 5.6]{GO}. 
We first recall the following result (see~\cite[Corollary 3]{Zai} or~\cite[Proposition 5.5]{GO}).
\begin{proposition}\label{Cor3}
Let $Y_0,\ldots ,Y_{b-1}$ be independent centered $\mathbb R^d$-valued random vectors. Let $q\geq2$ and set $M=\big(\sum_{j=0}^{b-1}\mathbb E|Y_j|^q\big)^{1/q}$. Assume that there exists a sequence $0=m_0<m_1<\ldots<m_s=b$ such that with $\zeta_k=Y_{m_k}+\ldots+Y_{m_{k+1}-1}$ and $B_k=\text{Cov}(\zeta_k)$, for any $v\in\mathbb R^d$ and $0\leq k<s$ we have that
\begin{equation}\label{BLOCKS}
100M^2|v|^2\leq B_kv\cdot v\leq 100CM^2|v|^2,
\end{equation}
where $C\geq1$ is some constant. Then, there exists a coupling between $(Y_0,\ldots,Y_{b-1})$ and a sequence of independent Gaussian random vectors $(S_0,\ldots ,S_{b-1})$ such that $\text{Cov}(S_j)=\text{Cov}(Y_j)$ for each $j\in \N$ and 
\begin{equation}\label{z}
\mathbb P\left(\max_{0\leq i\leq b-1}\left|\sum_{j=0}^{i}Y_j-S_j\right|\geq Mz\right)\leq C'z^{-q}+\exp(-C'z),
\end{equation}
for all $z\geq C'\log s$. Here, $C'$ is a positive constant which depends only of $C$,  $d$ and  $q$.
\end{proposition}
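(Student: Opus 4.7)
The plan is to follow Zaitsev's block coupling scheme. The hypothesis supplies a decomposition of $(Y_0, \ldots, Y_{b-1})$ into blocks $\zeta_k = Y_{m_k} + \cdots + Y_{m_{k+1}-1}$ whose covariances $B_k$ are pinched between $100 M^2 I$ and $100 C M^2 I$ in the sense of \eqref{BLOCKS}. This non-degeneracy is exactly what enables a quantitative strong Gaussian approximation for the sum inside each block. I would therefore first construct, block by block and independently, Gaussian vectors $\eta_k$ with covariance $B_k$ and couple $\zeta_k$ to $\eta_k$ so that $|\zeta_k - \eta_k|$ is small on a large event.

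This single-block coupling is the technical core. The standard route is Fourier smoothing combined with Strassen--Dudley type inversion: one bounds the characteristic function difference of $\zeta_k$ and $\eta_k$ via a multivariate Lindeberg estimate controlled by the $q$-th moment quantity $M$, then converts this into a coupling estimate using a multidimensional inversion lemma. Truncation at level of order $Mz$ yields the polynomial tail $z^{-q}$ from the $q$-th moment hypothesis, while Gaussian-like concentration on the bulk yields the exponential term $\exp(-C'z)$. Given the block-sum coupling, I would refine it to a coupling of the individual $Y_j$ with Gaussians $S_j$ satisfying $\text{Cov}(S_j) = \text{Cov}(Y_j)$ and $\sum_{m_k \le j < m_{k+1}} S_j = \eta_k$; because the $Y_j$ within a block are independent, this is a conditional quantile transport that does not disturb the block-level estimate. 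Finally, I would promote the block-sum bound to the maximum over all partial sums $\max_i |\sum_{j=0}^{i}(Y_j - S_j)|$ by combining Levy-type maximal inequalities for the within-block oscillations of both $(Y_j)$ and $(S_j)$ with a union bound over the $s$ blocks; the condition $z \ge C' \log s$ is exactly what makes this union bound harmless.

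The main obstacle is this single-block coupling, namely producing the sharp two-term tail $C' z^{-q} + \exp(-C' z)$ for a sum of independent non-identically-distributed vectors under only a $q$-th moment hypothesis, with constants depending only on $C$, $d$, and $q$. This is the substantive content of~\cite{Zai} and is not elementary; it requires delicate characteristic-function estimates in the regime where \eqref{BLOCKS} holds together with a strong multidimensional Strassen-type inversion. Once this ingredient is in hand, the block assembly and the promotion to the running maximum are comparatively routine.
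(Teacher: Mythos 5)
The paper does not prove Proposition~\ref{Cor3}; it imports it verbatim as a known result, citing~\cite[Corollary 3]{Zai} (see also~\cite[Proposition 5.5]{GO}). There is therefore no ``paper's own proof'' to compare your attempt against; the proposition is used as a black box.

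Regarding your sketch itself: it is a reasonable high-level outline of what goes into Zaitsev's theorem, and you rightly identify that the quantitative single-block Gaussian coupling under a bare $q$-th moment hypothesis is the hard, non-elementary content of~\cite{Zai}. However, as written the final assembly has a real gap. You propose to couple each block $\zeta_k$ to $\eta_k$ with error tail of order $z^{-q}+\exp(-C'z)$ and then take a union bound over the $s$ blocks; that produces $s\,z^{-q}+s\exp(-C'z)$, and the hypothesis $z\ge C'\log s$ kills the prefactor $s$ only in the exponential term (since $s\exp(-C'z)\le s^{1-C'^2}$) and not in the polynomial term (where $s\,z^{-q}\ge s(\log s)^{-q}\to\infty$). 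The correct arrangement, and what underlies Zaitsev's bound, is to spend the $q$-th moment once globally: the event $\{\max_j|Y_j|>Mz\}$ already has probability at most $\sum_j \mathbb E|Y_j|^q/(Mz)^q=z^{-q}$, so after this single truncation the $Y_j$ are bounded and the per-block couplings carry purely exponential tails, which the union bound over $s$ blocks and the condition $z\ge C'\log s$ then handle. Your sketch conflates the global truncation with a per-block truncation, which is where the argument would break down; and even after that clarification, the crucial ingredient (a multidimensional Gaussian coupling of a block of bounded independent vectors with constants depending only on $C$, $d$, $q$, under the covariance pinching~\eqref{BLOCKS}) is left entirely to~\cite{Zai}, which is precisely how the paper treats the statement.
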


\begin{lemma}\label{28}
Suppose that $p>2+2/\beta$. Then
for any $n\in\bbN$, there exists a coupling between $(Y_{n,0},\ldots ,Y_{n,F(n)-1})$ and $(S_{n,0},\ldots ,S_{n,F(n)-1})$, where the $S_{n,j}$'s are independent centered Gaussian random variables with $Var(S_{n,j})=Var(Y_{n,j})$, such that
\begin{equation}\label{(5.11)}
\sum_{n} \mathbb P\left(\max_{1\leq i\leq F(n)}\left|\sum_{j=0}^{i-1}Y_{n,j}-S_{n,j}\right|\geq 2^{((1-\beta)/2+(\beta+1)/p+\ve/2)n}\right)<\infty.
\end{equation}
\end{lemma}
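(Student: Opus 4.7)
The plan is to apply Proposition~\ref{Cor3} (Zaitsev's coupling) to the independent centered variables $Y_{n,0},\ldots,Y_{n,F(n)-1}$, with $d=1$, $q=p-\eta$ for a small $\eta>0$, and with carefully chosen block endpoints $0=m_0<m_1<\ldots<m_s=F(n)$ of (approximately) equal length $\ell$. First I would gather the moment and variance estimates for the single blocks $Y_{n,j}\stackrel{d}{=}X_{n,j}=\sum_{\ell\in I_{n,j}}A_\ell$. Using the moment bound~\eqref{Modified bound}, since each $I_{n,j}\subset[2^n,2^{n+1})$ has length $\asymp 2^{n-f}$ with $f=\lfloor\beta n\rfloor$, one obtains
\[
\|Y_{n,j}\|_{L^q}\leq C\,2^{(n-f)/2}\,2^{n/p},
\]
hence
\[
M:=\Bigl(\sum_{j=0}^{F-1}\mathbb E|Y_{n,j}|^q\Bigr)^{1/q}\leq C\,F^{1/q}\,2^{(n-f)/2}\,2^{n/p}\asymp 2^{(1-\beta)n/2+(1+\beta)n/p},
\]
where the last estimate follows on sending $\eta\downarrow0$. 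On the other hand, the lower bound~\eqref{Go1} applied to the $\ell$ consecutive intervals that make up a block $\zeta_k=Y_{m_k}+\ldots+Y_{m_{k+1}-1}$ (which is a sum of independent variables, so variances add) gives $\mathrm{Var}(\zeta_k)\geq c\,\ell\cdot 2^{n-f}$, and the trivial upper bound $\mathrm{Var}(\zeta_k)\leq \sum_j\|Y_{n,j}\|_{L^2}^2\leq C\,\ell\,2^{n-f}\,2^{2n/p}$.

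The crucial second step is to choose $\ell$ so that condition~\eqref{BLOCKS} holds, i.e.\ $\mathrm{Var}(\zeta_k)\asymp M^2$. This forces $\ell\asymp M^2\cdot 2^{-(n-f)}\asymp 2^{2f/p+2n/p}$, and the coupling only makes sense if $\ell<F=2^f$, i.e.\ $2f/p+2n/p<f$. Writing $f\asymp\beta n$ this becomes $\beta(p-2)>2$, which is exactly the standing hypothesis $p>2+2/\beta$. Under this hypothesis there is room to pick $\ell$ of the above order (and then $s=F/\ell\asymp 2^{f-2f/p-2n/p}$), and the variance constraint~\eqref{BLOCKS} is verified for some absolute constant $C\geq 1$ depending on $\beta,p$ but not on $n$.

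Next I would apply the Gaussian coupling~\eqref{z} with $z=z_n:=2^{\ve n/2}$. Since $\log s\leq Cn\ll z_n$ for $n$ large, the condition $z\geq C'\log s$ in Proposition~\ref{Cor3} is satisfied. The threshold $Mz_n$ then becomes
\[
Mz_n\;\asymp\;2^{(1-\beta)n/2+(1+\beta)n/p+\ve n/2},
\]
which matches the threshold on the right-hand side of~\eqref{(5.11)}. The tail bound~\eqref{z} therefore yields
\[
\mathbb P\!\left(\max_{1\leq i\leq F(n)}\Bigl|\sum_{j=0}^{i-1}Y_{n,j}-S_{n,j}\Bigr|\geq 2^{((1-\beta)/2+(\beta+1)/p+\ve/2)n}\right)\leq C'z_n^{-q}+\exp(-C'z_n),
\]
and both terms on the right are exponentially small in $n$ (the polynomial term being $\asymp 2^{-q\ve n/2}$), so summing over $n\in\mathbb N$ gives a finite series and proves~\eqref{(5.11)}.

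The main obstacle is the \emph{block-variance balancing} in step two: the target variance $M^2$ is inflated by the factor $2^{(1+\beta)n/p}$ coming from the non-uniform $L^p$ growth~\eqref{Go2}, so the blocks must be long enough to catch up with this inflation while still fitting inside $[0,F(n))$. This is precisely where the sharp quantitative constraint $p>2+2/\beta$ enters, and one must check that the constants coming from~\eqref{Modified bound} and~\eqref{Go1} can be arranged, by possibly discarding a few variables at the end of the block decomposition, so that~\eqref{BLOCKS} holds uniformly in $k$ and $n$.
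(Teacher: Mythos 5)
Your setup (moment bound from~\eqref{Modified bound}, the computation of $M$, the verification that $p>2+2/\beta$ makes $M^2\ll 2^n$, and the choice $z=2^{\ve n/2}$ in Proposition~\ref{Cor3}) all agree with the paper. The gap is in the block construction: you partition $\{0,\ldots,F-1\}$ into \emph{fixed-length} blocks of length $\ell$ chosen so that $\ell\cdot 2^{(1-\beta)n}\asymp M^2$, but this cannot satisfy~\eqref{BLOCKS} with a constant $C$ independent of $n$. The individual variances $v_j=\mathrm{Var}(Y_{n,j})$ are only pinned between $u\,2^{(1-\beta)n}$ (from~\eqref{Go1}) and $M^2\asymp 2^{(1-\beta)n+2(\beta+1)n/p}$ (from $v_j\le\|Y_{n,j}\|_{L^q}^2$), a spread of order $2^{2(\beta+1)n/p}$. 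If a fixed-length block happens to consist of small-variance summands its variance is $\approx\ell\,2^{(1-\beta)n}$, while a block of large-variance summands has variance $\approx\ell\,M^2$. Forcing the former $\geq 100M^2$ requires $\ell\gtrsim 2^{2(\beta+1)n/p}$, while forcing the latter $\leq 100CM^2$ with bounded $C$ requires $\ell\lesssim 1$; these are incompatible for large $n$. Since $C'$ in~\eqref{z} depends on $C$, letting $C$ grow with $n$ ruins the summability in~\eqref{(5.11)}, so this is not merely a matter of ``discarding a few variables at the end.''

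The paper avoids this by using a \emph{greedy variable-length} block decomposition rather than fixed-length blocks: setting $v_j=\mathrm{Var}(Y_{n,j})\le M^2$, it accumulates consecutive indices until the partial sum of variances first reaches $100M^2$, which (because each increment is $\le M^2$) lands in $[100M^2,101M^2]$; the leftover tail with accumulated variance below $100M^2$ is appended to the last block, giving all block variances in $[100M^2,201M^2]$ and hence~\eqref{BLOCKS} with $C=201/100$ uniformly in $n$. The hypothesis $p>2+2/\beta$ is used there only to ensure that $\sum_j v_j\gtrsim 2^n\gg M^2$, so that the greedy procedure produces at least one full block (and $s\leq F$). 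Everything else in your proposal — applying Proposition~\ref{Cor3} with $z=2^{\ve n/2}$, noting $\log s\lesssim n\ll z$, and summing the tail bound over $n$ — is the same as the paper once the block construction is corrected.
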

\begin{proof}[Proof of Lemma~\ref{28}]
Take $q\in (2,p)$.
By (\ref{Modified bound}), we have that
\begin{equation}\label{a5}
\|Y_{n,j}\|_{L^q}\leq C2^{(1-\beta)n/2+n/p},
\end{equation}
where we have used that the right end point of each $I_{n,j}$ does not exceed $2^{n+1}$ and that $X_{n,j}$ and $Y_{n,j}$ are equally distributed. It follows from~\eqref{a5} that \[M:=\bigg(\sum_{j=0}^{F-1}\|Y_{n,j}\|_{L^q}^q\bigg)^{\frac 1q}\] satisfies
\[
M\leq C2^{n/p+\be n/q+(1-\beta)n/2}. 
\]
Therefore, if $q$ is sufficiently close to $p$  then $M^2$ is much smaller than $2^n$, where we have used that $p>2+2/\beta$.  On the other hand, by (\ref{Go1}) we have 
\begin{equation}\label{V est}
Var(Y_{n,j})=Var(X_{n,j})\geq u 2^{(1-\beta)n}
\end{equation}
for some constant $u>0$ which does not depend on $n$ and $j$. Here we have taken into account that the length of each $I_{n,j}$ is of magnitude $2^{(1-\be)n}$.
By (\ref{V est}) we have
\begin{equation}\label{434x}
Var\bigg (\sum_{j=0}^{F-1}Y_{n,j} \bigg)=\sum_{j=0}^{F-1}Var\big (Y_{n,j} \big)\geq c 2^n,
\end{equation}
where $c>0$ is some  constant.

Next, set $v_j=v_{n,j}=Var(Y_{n,j})$. Then $v_j\leq \|Y_{n,j}\|_{L^q}^2\leq M^2$. Let $u_1$ be the largest index such that 
\[
v_0+\ldots +v_{u_1-1}\geq 100M^2.
\]
Such index exists since $\sum_{j=0}^{F-1} v_j$ is much larger than $M^2$ (see~\eqref{434x}). Notice now that 
\[
v_0+\ldots +v_{u_1-1}\leq v_0+\ldots +v_{u_1-2}+M^2\leq 101M^2.
\]
This gives us the first block $\{Y_{n,0}, \ldots, Y_{n, u_1-1}\}$ of consecutive $Y_{n,j}$'s from the proof of~\cite[Lemma 5.6]{GO} such that~\eqref{BLOCKS} holds. We can continue by  forming $k+1$ consecutive blocks, namely  \[\{Y_{n,0},\ldots ,Y_{n,u_1-1}\},\ldots ,\{Y_{n, u_k},\ldots ,Y_{n,u_{k+1}-1}\},\] where $k$ is the first step in the construction such that \[v_{u_{k+1}}+\ldots +v_{F}<100M^2.\] Then, we add $Y_{n,u_{k+1}},\ldots ,Y_{n,F}$ to the last block  $\{Y_{u_k},\ldots ,Y_{n,u_{k+1}-1}\}$ we have constructed. This means that we can always assume that the sum of the variances of the random variables $Y_j=Y_{n,j}$ along successive blocks is not less than $100M^2$ and that it doesn't  exceed $201M^2$. The statement  of the lemma now follows by applying Proposition~\ref{Cor3} with $z=2^{\ve n/2}$, taking into account that the number of blocks is trivially bounded by $F=F(n)$.
\end{proof}

\emph{Third step:} It follows from the previous two steps of the proof that, when $p>2+2/\beta$ there exists a coupling between $(A_n)_{n\in\cI}$ and a sequence $(B_n)_{n\in\cI}$ of independent centered normal random variables  so that when $(n,j)$ tends to infinity, we have 
\[
\left|\sum_{\ell<i_{n,j},\ell\in\cI}(A_\ell-B_\ell)\right|=o(2^{(\beta+\ve)n/2}+ 2^{((1-\beta)/2+(\beta+1)/p+\ve)n}),
\] 
where $i_{n,j}$ denotes the smallest element of $I_{n, j}$. We note  that we have also used the  so-called Berkes–Philipp lemma (see \cite[Lemma A.1]{BP} or \cite[Lemma 3.1]{GO}).

\emph{Fourth step:}
We now establish the version of~\cite[Lemma 5.8]{GO}. However, before we do that we need the following result, which is a consequence of \cite[Theorem 1]{Morc} (see also~\cite[Corollary B1]{Serfling}).
\begin{lemma}\label{be1}
Let $Y_1,\ldots ,Y_d$ be a finite sequence of random variables. Let $v>2$ be finite and assume that there exist constants $C_1, C_2>0$ such that $\|Y_i\|_{L^v}\leq C_1$  for every $i\in \{1, \ldots, d\}$. Moreover,  assume  that for any $a, n\in \N$ satisfying $a+n\leq d$, we have that 
\[
\|S_{a,n}\|_{L^v}\leq C_2^2n^{\frac12},
\]
 where \[S_{a,n}=\sum_{i=a+1}^{a+n}Y_i.\] Then, there exists a constant $K>0$ (depending only on $C_1,C_2$ and $v$) such that for any $a$ and $n$,
 \begin{equation}\label{Bou}
 \|M_{a,n}\|_{L^v}\leq Kn^{\frac12},
 \end{equation}
 where \[M_{a,n}=\max\{|S_{a,1}|,\ldots,|S_{a,n}|\}.\]
 \end{lemma}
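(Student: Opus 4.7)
My plan is to prove the estimate via a dyadic decomposition of partial sums, following the classical maximal-inequality argument of M\'oricz. Alternatively, the conclusion follows directly from \cite[Theorem 1]{Morc} or \cite[Corollary B1]{Serfling} after observing that $n \mapsto n^{v/2}$ is superadditive in the sense required there, since $v > 2$ gives $n_1^{v/2} + n_2^{v/2} \le (n_1 + n_2)^{v/2}$ by convexity. For transparency I sketch the direct argument.

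First, I reduce to the case where the length is a power of two: fix $m$ with $2^{m-1} < n \le 2^m$, so that $M_{a,n} \le M_{a, 2^m}$. Next, I exploit the binary representation $k = \sum_{j \in J(k)} 2^j$ of each index $1 \le k \le 2^m$ to write $S_{a,k}$ as a sum of at most $m+1$ partial sums over dyadic blocks. Processing the summands in decreasing order of $j$ produces $S_{a,k} = \sum_{j \in J(k)} S_{a + c_j(k),\, 2^j}$, where each offset $c_j(k)$ is a multiple of $2^j$ because all previously processed blocks have length a multiple of $2^j$. Taking absolute values and maximising over $k$ yields the key pointwise bound
\[
M_{a, 2^m} \le \sum_{j=0}^{m} \max_{0 \le i < 2^{m-j}} | S_{a + i \cdot 2^j,\, 2^j} |.
\]

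I then estimate the $L^v$ norm of each dyadic level crudely by replacing the max by a sum inside the expectation:
\[
\Big\| \max_{0 \le i < 2^{m-j}} | S_{a + i \cdot 2^j,\, 2^j} | \Big\|_{L^v}^v \le \sum_{i=0}^{2^{m-j}-1} \| S_{a + i \cdot 2^j,\, 2^j} \|_{L^v}^v \le 2^{m-j} \cdot C_2^{2v} \cdot 2^{jv/2}.
\]
Taking $v$-th roots, summing over $j$ and factoring gives
\[
\|M_{a, 2^m}\|_{L^v} \le C_2^2 \cdot 2^{m/v} \sum_{j=0}^{m} 2^{j(1/2 - 1/v)}.
\]
The decisive point is that $v > 2$ forces $1/2 - 1/v > 0$, so the geometric series is dominated by its last term and is of order $2^{m(1/2 - 1/v)}$. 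This cancels the $2^{m/v}$ prefactor, leaving $\|M_{a, 2^m}\|_{L^v} \le K' \cdot 2^{m/2} \le K n^{1/2}$ for a constant $K$ depending only on $v$, $C_1$ and $C_2$, as required.

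The main technical obstacle is verifying that each partial sum $S_{a,k}$ genuinely splits into blocks whose starting positions land on the correct dyadic grid; this is a combinatorial bookkeeping argument that reduces to the identity $\sum_{i > j} \epsilon_i 2^i \equiv 0 \pmod{2^j}$ for the binary digits $\epsilon_i$. The hypothesis $\|Y_i\|_{L^v} \le C_1$ plays essentially no role in the core argument beyond ensuring that individual terms are well defined; it is the partial-sum bound with the $n^{1/2}$ scaling that drives everything, and the threshold $v > 2$ is exactly what makes the dyadic sum summable without a logarithmic factor in $n$.
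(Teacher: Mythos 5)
Your argument is correct, and it is in fact more detailed than the paper's treatment: the paper simply invokes \cite[Theorem 1]{Morc} and \cite[Corollary B1]{Serfling} as a black box, so what you have written out is essentially the underlying Rademacher--Menshov/M\'oricz bisection argument that those references encapsulate. The pointwise dyadic bound
\[
M_{a,2^m}\le \sum_{j=0}^{m}\max_{0\le i<2^{m-j}}\bigl|S_{a+i2^j,\,2^j}\bigr|
\]
follows, as you note, from writing each $k\le 2^m$ in binary and splitting $S_{a,k}$ into blocks of dyadic length in decreasing order of size; your observation that each offset is then automatically a multiple of the block length is exactly the needed bookkeeping. The crude max-by-sum estimate inside the $L^v$ norm is also standard, the geometric series converges precisely because $v>2$, and the cancellation $2^{m/v}\cdot 2^{m(1/2-1/v)}=2^{m/2}$ gives the stated rate. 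Two small remarks. First, when you pass from $M_{a,n}$ to $M_{a,2^m}$ you should either pad the sequence with zeros or, more simply, restrict the inner maxima to indices $i$ with $a+(i+1)2^j\le a+n\le d$ (which does not increase the count $2^{m-j}$); as written the block $S_{a+i2^j,\,2^j}$ could protrude past index $d$. Second, your parenthetical invocation of M\'oricz via ``superadditivity of $n^{v/2}$'' is not quite the hypothesis of his Theorem~1, which asks for a superadditive $g(a,n)$ raised to a power $\alpha>1$; here one would rather take $g(a,n)=C_2^4 n$ (plainly additive) and $\alpha=v/2>1$, so that $g^\alpha$ recovers the assumed bound $C_2^{2v}n^{v/2}$. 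Neither point affects the validity of your direct argument.
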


The following is the already announced version of~\cite[Lemma 5.8]{GO}. 
\begin{lemma}\label{Lemma 5.8}
We have  that as $(n,j)\to\infty$,
\begin{equation}\label{4th}
\max_{m<|I_{n,j}|}\left|\sum_{\ell=i_{n,j}}^{i_{n,j}+m}A_\ell \right|=o(2^{((1-\beta)/2+\beta/p+1/p+\ve)n}) \quad \text{a.s.} 
\end{equation}

\end{lemma}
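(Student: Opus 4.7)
The plan is to apply the maximal inequality Lemma~\ref{be1} to the suitably rescaled finite sequence $(A_\ell)_{\ell\in I_{n,j}}$ and then upgrade the resulting $L^{p-\eta}$ bound on the running maximum to the announced almost sure estimate by a Markov/Borel-Cantelli argument over the index $(n,j)$.

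First I would fix a small $\eta\in(0,p-2)$ and rescale by setting $Y_\ell:=A_\ell/2^{n/p}$ for $\ell\in I_{n,j}$. Since $i_{n,j}+|I_{n,j}|\le 2^{n+1}$, the pointwise hypothesis~\eqref{Go2} yields a uniform bound $\|Y_\ell\|_{L^{p-\eta}}\le\|Y_\ell\|_{L^p}\le 2^{1/p}a$, and the block estimate~\eqref{Modified bound} yields, for any shift $a$ and length $m$ with $a+m\le 2^{n+1}$,
\[
\Bigl\|\sum_{\ell=a+1}^{a+m}Y_\ell\Bigr\|_{L^{p-\eta}}\le Cm^{1/2}.
\]
Hence the hypotheses of Lemma~\ref{be1} hold with $v=p-\eta$ for the finite sequence $(Y_\ell)_{\ell\in I_{n,j}}$, producing a constant $K$ independent of $(n,j)$ such that
\[
\Bigl\|\max_{0\le m<|I_{n,j}|}\Bigl|\sum_{\ell=i_{n,j}}^{i_{n,j}+m}A_\ell\Bigr|\Bigr\|_{L^{p-\eta}}\le K\cdot 2^{n/p}|I_{n,j}|^{1/2}\le K'\cdot 2^{(1-\beta)n/2+n/p}.
\]

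Next, write $M_{n,j}$ for the maximum on the left-hand side of~\eqref{4th} and set $T_n:=2^{((1-\beta)/2+\beta/p+1/p+\ve)n}$. Markov's inequality at exponent $p-\eta$ yields
\[
\mathbb P(M_{n,j}\ge T_n)\le C\cdot 2^{-(\beta/p+\ve)(p-\eta)n}.
\]
Summing over $0\le j<F(n)\le 2^{\beta n}$ gives a bound of order $2^{\beta n-(\beta/p+\ve)(p-\eta)n}=2^{(\beta\eta/p-\ve(p-\eta))n}$, whose exponent is strictly negative once $\eta$ is chosen small enough (depending on $\ve$, $\beta$, $p$). The Borel-Cantelli lemma then gives $M_{n,j}\le T_n$ for all sufficiently large $(n,j)$, almost surely; since $\ve>0$ was arbitrary, replacing $\ve$ by $\ve/2$ in the argument shows $M_{n,j}=o(T_n)$ almost surely as $(n,j)\to\infty$, which is precisely~\eqref{4th}.

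The only delicate point---and it is mild---is verifying the hypotheses of Lemma~\ref{be1} uniformly in the shift $a$ and length $m\le|I_{n,j}|$. This is exactly what forces the normalization by $2^{n/p}$: it absorbs the blow-up of both the pointwise and block $L^p$ estimates over the time horizon $[0,2^{n+1}]$, so that the constants in Lemma~\ref{be1} can be taken independent of $(n,j)$. Once this normalization is in place the remainder of the argument is a routine Borel-Cantelli computation.
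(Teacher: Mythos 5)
Your argument is correct and follows essentially the same route as the paper: normalize $(A_\ell)_{\ell\in I_{n,j}}$ by a factor of order $2^{n/p}$, apply Lemma~\ref{be1} (with $v=p-\eta$ playing the role of the paper's $q\in(2,p)$ chosen close to $p$) to get a uniform $L^{p-\eta}$ bound on the running maximum of order $|I_{n,j}|^{1/2}$, and then conclude by Markov plus Borel--Cantelli over the roughly $2^{\beta n}$ indices $j$. The only superficial differences are the choice of normalizing constant ($2^{n/p}$ rather than $(i_{n,j}+|I_{n,j}|)^{1/p}$, which are comparable) and whether Markov is applied to the normalized or unnormalized maximum; the underlying computation and its outcome are the same.
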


\begin{proof}[Proof of Lemma \ref{Lemma 5.8}]
Let $q\in(2,p)$. Consider the finite sequence \[Y_k=A_{k}/({i_{n,j}+|I_{n,j}|})^{1/p}, \quad  k\in I_{n,j}.\] Then, by (\ref{Go2}) there exists a constant $C_1>0$ which does not depend on $n$ and $j$ so that $\|Y_k\|_{L^q}\leq C_1$, for any $k \in I_{n,j}$. Moreover, by (\ref{Modified bound}), there exists a constant $C_2>0$ which does not depend on $n$ and $j$ so that for any relevant $a$ and $b$,
\[
\left\|\sum_{k=a+1}^{a+b}Y_k\right\|_{L^q}\leq C_2b^{\frac12}.
\]
Using the same notation as in statement of Lemma~\ref{be1}, we observe that it follows from~\eqref{Bou} that 
\[
\|M_{n,b}\|_{L^q}\leq Kb^{\frac12},
\]
for some constant $K>0$ (which depends only $C_1,C_2$ and $q$).

 In particular, by setting  $v=(1-\beta)/2+\beta/p+\ve/2$, we have that 
\[
\mathbb P(M_{i_{n,j},|I_{n,j}|}\geq 2^{vn})\leq \| \ M_{i_{n,j},|I_{n,j}|}\|_{L^q}^q/2^{vnq}\leq 	K|I_{n,j}|^{q/2}/2^{vnq}.
\]
Moreover, observe that 
\[
\sum_{n,j}|I_{n,j}|^{q/2}/2^{vnq}\leq\sum_{n}2^{\beta n}2^{(1-\be)nq/2-vnq}.
\]
Notice  that the above sum is finite  if $q$ is sufficiently close  to $p$. Applying the Borel-Cantelli lemma yields that, as $(n,j)\to\infty$,
\begin{equation*}
\max_{m<|I_{n,j}|}\left|\sum_{\ell=i_{n,j}}^{i_{n,j}+m}Y_\ell \right|=o(2^{((1-\beta)/2+\beta/p+\ve)n}),  
\end{equation*}
which implies that~(\ref{4th}) holds (since the right end point of $I_{n,j}$ does not exceed $2^{n+1}$).
 \end{proof}

\emph{Fifth step:} By combining the last two steps, we derive that when $k$ tends to infinity, 
\[
\left|\sum_{\ell<k,\,\ell\in\cI}(A_\ell-B_\ell)\right|=o(k^{(\beta+\ve)/2}+ k^{(1-\beta)/2+(\beta+1)/p+\ve})
\]
assuming that $p>2+2/\beta$.

\emph{Sixth step:}
Fix some $n$ and consider the finite sequence $Y_i=A_i/n^{1/p}$ where $i\in \{1,\ldots,n\}$. It follows from our assumptions that
$(Y_i)_i$ satisfies property~(H) (with constants that do not depend on $n$). Applying \cite[Lemma 5.9]{GO} with the finite sequence $(Y_i)$ (instead of $A_i$ there), we see that for any $\al>0$, there exists $C=C_\al$ (which does not depend on $n$) such that for any interval $J\subset[1,n]$ we have
\begin{equation}\label{L5.9}
n^{-2/p} \mathbb E\left|\sum_{\ell\in J\cap\cJ}A_i\right|^2=\mathbb E\left|\sum_{\ell\in J\cap\cJ}Y_i\right|^2\leq C|J\cap\cJ|^{1+\alpha}.
\end{equation}
We recall the following version of the Gal-Koksma law of large numbers, which is a direct consequence of~\cite[Theorem 3]{Morc}  together with some routine estimates (as those given in  the proof of~\cite[Theorem 6]{Morc}). We also note that the lemma can be proved by an easy adaptation of the arguments in the proof of \cite[Theorem A1]{PS}.
\begin{lemma}\label{p4}
Let $Y_1,Y_2,\ldots$ be a sequence of random variables such that with some constants $\sig\geq1$, $C>0$, $p>1$ and for any  $m, n\in \N$ we have that 
\[
\left\|\sum_{j=m+1}^{m+n}Y_j\right\|_{L^2}^2\leq C\big((n+m)^{\sigma}-m^{\sig}\big)\cdot (n+m)^{\frac2p}.
\] 
Then, for any $\del>0$ we have that $\mathbb P$-a.s. as $n\to\infty$,
\[
\sum_{j=1}^nY_j=o(n^{\sig/2+1/p}\ln^{3/2+\del}n).
\]
\end{lemma}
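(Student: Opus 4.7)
The plan is to combine a Rademacher--Menshov type maximal inequality with a dyadic Borel--Cantelli argument, which is the classical route to Gal--Koksma style strong laws. Write $S_n=\sum_{j=1}^n Y_j$ and set $\phi(m,n):=C\bigl((n+m)^\sigma-m^\sigma\bigr)(n+m)^{2/p}$, so that the hypothesis reads $\|S_{m+n}-S_m\|_{L^2}^2\le \phi(m,n)$. A direct calculation using $m^{2/p}\le(m+n)^{2/p}$ shows the super-additivity $\phi(0,m)+\phi(m,n)\le \phi(0,m+n)$, which is the structural feature that drives the entire argument.

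\textbf{First step (maximal inequality).} Upgrade the block $L^2$-bound to control of maxima. Writing an arbitrary partial sum as a superposition of at most $\log_2 n$ dyadic block sums and applying Cauchy--Schwarz together with the super-additivity of $\phi$ (this is precisely the Rademacher--Menshov argument, yielding the special case of Theorem 3 of \cite{Morc} that we need), one obtains a constant $C'$ such that for all $m,n\in\bbN$,
\[
\left\|\max_{1\le k\le n}\bigl|S_{m+k}-S_m\bigr|\right\|_{L^2}^{2}\le C'(\log n)^2\,\phi(m,n).
\]

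\textbf{Second step (dyadic Borel--Cantelli).} Apply the maximal inequality with $m=2^k$ and $n=2^k$. Since $\phi(2^k,2^k)\le C''\,2^{k(\sigma+2/p)}$, the quantity $M_k:=\max_{1\le\ell\le 2^k}|S_{2^k+\ell}-S_{2^k}|$ satisfies $\|M_k\|_{L^2}^2\le C''' k^2\,2^{k(\sigma+2/p)}$. For any $\delta>0$, Markov's inequality yields
\[
\bbP\!\left(M_k>2^{k(\sigma/2+1/p)}k^{3/2+\delta}\right)\le \frac{C'''}{k^{1+2\delta}},
\]
which is summable in $k$. Borel--Cantelli then gives $M_k\le 2^{k(\sigma/2+1/p)}k^{3/2+\delta}$ almost surely for all sufficiently large $k$.

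\textbf{Third step (assembling the estimate).} For arbitrary $N$, choose $k$ with $2^k\le N<2^{k+1}$ and telescope through the dyadic points:
\[
|S_N|\le |S_1|+\sum_{j=0}^{k-1}|S_{2^{j+1}}-S_{2^j}|+|S_N-S_{2^k}|\le |S_1|+\sum_{j=0}^{k} M_j.
\]
The contributions grow geometrically in $j$, so the $j=k$ term dominates and one concludes $|S_N|\le C\,N^{\sigma/2+1/p}(\log N)^{3/2+\delta}$ almost surely for all large $N$. Replacing $\delta$ by $\delta/2$ in the Borel--Cantelli step upgrades the resulting $O(\cdot)$ to the desired $o(\cdot)$, yielding the claimed a.s.\ rate. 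The main obstacle is the maximal inequality in the first step, since the hypothesis controls only $L^2$-norms of block sums rather than individual $Y_j$; however, this is precisely what the Rademacher--Menshov / Moricz machinery is designed for, so the argument reduces to careful bookkeeping of constants rather than any genuinely new input.
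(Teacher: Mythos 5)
Your proposal is correct and follows essentially the same route the paper indicates: the paper derives Lemma~\ref{p4} from M\'oricz's maximal inequality (Theorem~3 of \cite{Morc}, i.e.\ the Rademacher--Menshov bound you reconstruct from super-additivity of $\phi$) combined with a dyadic Borel--Cantelli argument of Gal--Koksma type, as in the proof of Theorem~A1 of \cite{PS}. Your verification of the super-additivity $\phi(m,n_1)+\phi(m+n_1,n_2)\le\phi(m,n_1+n_2)$ and the subsequent dyadic assembly are exactly the ``routine estimates'' the paper alludes to.
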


Relying on (\ref{L5.9}) and Lemma \ref{p4},
one can now repeat the arguments appearing  after the statement of~\cite[Lemma 5.9]{GO} with the finite sequence $\big(A_i/k^p\big)_{1\leq i\leq k}$ (instead of $(A_i)_i$), and conclude that 
\[
\sum_{\ell<k,\,\ell\in\cJ}A_{\ell}/k^{\frac 1p}=o(k^{\beta/2+\ve}).
\]
\emph{Finalizing the proof:}
Combining the estimates from the previous steps we  get a coupling of $(A_\ell)$ with independent centered normal random variables $(B_\ell)$ such that 
\[
\left|\sum_{\ell<k}(A_k-B_k)\right|=o(k^{\beta/2+\ve+\frac 1p}+k^{(1-\be)/2+(\beta+1)/p+\ve}),\quad \text{a.s.}
\]
Taking $\beta=p/(2p-2)$, we obtain (\ref{8:44}).  Observe that for this choice of $\beta$ we have $p>2+2/\beta$ since $p\geq 6$. When $4< p<6$ we can make a different choice of $\beta$ and obtain a slightly less attractive rate. To complete the proof of Theorem \ref{Gouzel Thm}, it remains to estimate the variance of the approximating Gaussian $G_n=\sum_{j=1}^n B_j$. Firstly, by applying~\cite[Proposition 9]{DH} with the finite sequence $\big{(}A_i/2^{(n+1)/p} \big{)}_{1\leq i\leq 2^{n+1}}$ replacing $(A_i)_i$,  we obtain that 
\[
\Big\|\sum_{(n',j')\prec (n,j)}X_{n',j'}-Y_{n',j'}\Big\|_{L^2}\leq C2^{\be n/2+n/p},
\]
where $(Y_{n',j'})$ are given by  Proposition \ref{bb}. Since $Y_{n',j'}$ and $S_{n',j'}$ have the same variances,  we conclude that 
\begin{equation}\label{Up}
\left|\Big\|\sum_{(n',j')\prec (n,j)}X_{n',j'}\Big\|_{L^2}-\Big\|\sum_{(n',j')\prec (n,j)}S_{n',j'}\Big\|_{L^2}\right|\leq C2^{\be n/2+n/p}.
\end{equation}
Take $n\in\bbN$, and let $N_n$ be such that $2^{N_n}\leq n<2^{N_n+1}$. Furthermore, let   $j_n$ be the largest index such that the left end point of $I_{N_n,j_n}$ is smaller than $n$. 
In the case when $n\in I_{N_n,j_n}$ we have
\[
\begin{split}
\sum_{i=1}^n A_i-\sum_{(n',j')\prec (N_n,j_n)}X_{n',j'} &=\sum_{(n',j')\prec (N_n,j_n)}\,\sum_{i\in J_{n',j'}}A_i+\sum_{i\in J_{N_n,j_n}}A_i\\
&\phantom{=}+\sum_{i=i_{N_n,j_n}}^{n}A_i \\
&=\sum_{i\leq n, i\in J}A_i+\sum_{i=i_{N_n,j_n}}^{n}A_i \\
&=:I_1+I_2.
\end{split}
\]
 Recall next that by \cite[(5.1)]{GO} the cardinality of $\cJ\cap[1,2^{N_n+1}]$ does not exceed $C2^{\ve (N_n+1)}2^{\be N_n}(\ve N_n+2)$, which for our specific choice of $N_n$ is at most $Cn^{\be +3\ve/2}$ (where $C$ denotes a generic constant independent of $n$).
Using (\ref{L5.9}) with a sufficiently small $\al$ we derive that
\[
\|I_1\|_{L^2}\leq C n^{1/p+\be/2+\ve}.
\]
On the other hand, applying (\ref{Modified bound}) we obtain that
\begin{eqnarray*}
\|I_2\|_{L^2}\leq C|I_{N_n,j_n}|^{\frac12}2^{N_n/p}\\\leq C2^{N_n(1-\beta)/2+N_n/p}\leq Cn^{(1-\beta)/2+1/p}\leq Cn^{\be/2+1/p}
\end{eqnarray*}
where we have used that for our specific choice of $\beta$ we have $(1-\be)/2=\be/2-\be/p<\be/2$.
We conclude that there exists a constant $C'>0$ so that for any $n\geq1$,
\[
\left\|\sum_{j=1}^{n}A_j-\sum_{(n',j')\prec (N_n,j_n)}X_{n',j'}\right\|_{L^2}\leq C'n^{\be/2+\ve +1/p}.
\]
The proof of (\ref{Var est}) in the case when $n\in I_{N_n,j_n}$ is completed now using (\ref{Up}). 
The case when $n\not\in I_{N_n,j_n}$ is treated similarly. We first write
\[
\sum_{i=1}^n A_i-\sum_{(n',j')\prec (N_n,j_n)}X_{n',j'}=
\sum_{j\in \cJ, j\leq n}A_i+X_{N_n,j_n}:=I_1+I_2.
\]
Then the $L^2$-norms of $I_1$ and $I_2$ are bounded exactly as in the case when $n\in I_{N_n,j_n}$, and the proof of~\eqref{Var est} is complete. Finally,  the last conclusion in the statement of the theorem follows directly  from~\eqref{8:44}, \eqref{Var est} together with~\cite[Theorem 3.2A]{HR}, \cite[Lemma A.1]{BP} (seel also~\cite[Lemma 3.1]{GO}) and the so-called Strassen-Dudley theorem~\cite[Theorem 6.9]{Bil} (see also~\cite[Theorem 3.4]{GO}).
\end{proof}

\section{Main result}\label{Main}
The goal of this section is to establish the quenched  almost sure invariance principle for random distance expanding maps satisfying suitable conditions. This is done by applying Theorem~\ref{Gouzel Thm}.

Without any loss of generality, we can suppose that our observable $\psi \colon \cE \to \mathbb R$ is fiberwise centered, i.e. that $\int_{\cE_\om }\psi_\om \, d\mu_\om =0$ for $\mathbb P$-a.e. $\om \in \Omega$. Indeed, otherwise we can simply replace $\psi$ with $\tilde \psi$ given by 
\[
\tilde \psi_\om=\psi_\om-\int_{\cE_\om}\psi_\om\, d\mu_\om, \quad \om \in \Omega. 
\]
In what follows, $\mathbb E_\om(\varphi)$ will denote the expectation of   a measurable $\varphi \colon \cE_\om \to \mathbb R$ with respect to $\mu_\omega$. 
The proof of the following result can be obtained by repeating the arguments from~\cite[Lemma 12.]{DFGTV1} and~\cite[Proposition 3.]{DFGTV1} (see also  ~\cite[Theorem 2.3.]{kifer})
\begin{proposition}
We have the following:
\begin{enumerate}
\item there exists $\Sigma^2 \ge 0$ such that 
\begin{equation}\label{827}
\lim_{n\to \infty} \frac 1 n \mathbb E_\om\bigg{(}\sum_{k=0}^{n-1}\psi_{\sigma^k \om}\circ f_\omega^k \bigg{)}^2=\Sigma^2, \quad \text{for $\mathbb P$-a.e. $\om \in \Omega$;}
\end{equation}
\item $\Sigma^2=0$ if and only if there exists $\varphi \in L_\mu^2 (\mathcal E)$ such that 
\[
\psi=\varphi -\varphi \circ F.
\]
\end{enumerate}
\end{proposition}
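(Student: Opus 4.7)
The plan is to work fiberwise with the normalized transfer operator $\hat\cL_\omega$, combining its non-uniform exponential decay on mean-zero functions (Lemma~\ref{ds}) with Birkhoff's theorem on the base $(\Omega,\mathbb P,\sigma)$, in the spirit of \cite[Lemma 12, Proposition 3]{DFGTV1} and \cite[Theorem 2.3]{kifer}. The non-uniformity of our decay requires handling $\omega$-dependent constants via orbit averages rather than absorbing them into uniform bounds.

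\smallskip
\noindent\emph{Existence of $\Sigma^2$.} Set $S_n=\sum_{k=0}^{n-1}\psi_{\sigma^k\omega}\circ f_\omega^k$ and expand the square. Using the equivariance~\eqref{8:11} of $\{\mu_\omega\}$ together with the adjointness $\int g\cdot(h\circ f_\omega^\ell)\,d\mu_\omega=\int(\hat\cL_\omega^\ell g)\,h\,d\mu_{\sigma^\ell\omega}$, I would write
\[
\mathbb E_\omega(S_n^2)=\sum_{k=0}^{n-1}\int\psi_{\sigma^k\omega}^2\,d\mu_{\sigma^k\omega}+2\sum_{0\le j<k\le n-1}\int(\hat\cL_{\sigma^j\omega}^{k-j}\psi_{\sigma^j\omega})\cdot\psi_{\sigma^k\omega}\,d\mu_{\sigma^k\omega}.
\]
After dividing by $n$, the diagonal converges a.s.\ to $\int\psi^2\,d\mu$ by Birkhoff. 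Fiberwise centering of $\psi$ makes Lemma~\ref{ds} applicable, bounding each off-diagonal integrand by $\max(1,Q_{\sigma^j\omega}^{-1})\,K(\sigma^k\omega)\,e^{-\lambda(k-j)}\,\|\psi_{\sigma^j\omega}\|_{\al,\xi}\,\|\psi_{\sigma^k\omega}\|_\infty$. Grouping the double sum by $\ell=k-j$, invoking Birkhoff for each fixed $\ell$ on the orbit average of the random coefficient, and applying dominated convergence in $\ell$ with majorant $e^{-\lambda\ell}$ yields
\[
\Sigma^2=\int\psi^2\,d\mu+2\sum_{\ell=1}^\infty\int\psi\cdot(\psi\circ F^\ell)\,d\mu,
\]
absolutely convergent at geometric rate.

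\smallskip
\noindent\emph{Coboundary implies $\Sigma^2=0$.} If $\psi=\varphi-\varphi\circ F$ with $\varphi\in L^2(\mu)$, then $S_n=\varphi_\omega-\varphi_{\sigma^n\omega}\circ f_\omega^n$ telescopes, so by equivariance
\[
\mathbb E_\omega(S_n^2)\le 2\|\varphi_\omega\|_{L^2(\mu_\omega)}^2+2\|\varphi_{\sigma^n\omega}\|_{L^2(\mu_{\sigma^n\omega})}^2.
\]
Since $\omega\mapsto\|\varphi_\omega\|_{L^2(\mu_\omega)}^2$ lies in $L^1(\mathbb P)$, Birkhoff forces the second term to be $o(n)$ a.s., hence $\Sigma^2=0$.

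\smallskip
\noindent\emph{The converse: main obstacle.} I expect the direction $\Sigma^2=0\Rightarrow$ coboundary to be the main difficulty. My plan is first to integrate the fiberwise exponential bound of the first part against $\mathbb P$, upgrading it to the summable global decay $\sum_k k\,|\int\Psi\cdot(\Psi\circ F^k)\,d\mu|<\infty$, where $\Psi(\omega,x):=\psi_\omega(x)$; verifying the requisite $\mathbb P$-integrability of the random coefficients $Q^{-1}$, $K$, and $\|\psi\|_{\al,\xi}$ in Lemma~\ref{ds} is the delicate technical point in our non-uniform setting. Given summability, the identity
\[
\|S_n\Psi\|_{L^2(\mu)}^2=n\Sigma^2-2\sum_{k=1}^{n-1}k\int\Psi\cdot(\Psi\circ F^k)\,d\mu-2n\sum_{k\ge n}\int\Psi\cdot(\Psi\circ F^k)\,d\mu,
\]
together with $\Sigma^2=0$, gives $\sup_n\|S_n\Psi\|_{L^2(\mu)}<\infty$. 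Since the Koopman operator $U\Phi:=\Phi\circ F$ is an isometry of $L^2(\mu)$ (because $\mu$ is $F$-invariant), a Browder-type argument (cf.\ \cite[Theorem 2.3]{kifer}) then supplies $\Phi\in L^2(\mu)$ with $\Psi=(I-U)\Phi$, and $\varphi:=\Phi$ is the desired coboundary.
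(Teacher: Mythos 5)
Your overall strategy---expand $\mathbb E_\omega(S_n^2)$ via the adjointness identity $\int g\,(h\circ f_\omega^\ell)\,d\mu_\omega=\int(\hat\cL_\omega^\ell g)\,h\,d\mu_{\sigma^\ell\omega}$, control off-diagonal blocks via Lemma~\ref{ds}, apply Birkhoff's theorem at each lag $\ell$, and for the converse deduce $\sup_n\|S_n\Psi\|_{L^2(\mu)}<\infty$ and invoke a Browder/Lin-type argument for the Koopman isometry on $L^2(\mu)$---is the same circle of ideas used in the references the paper points to, \cite{DFGTV1, kifer}, and your coboundary $\Rightarrow\Sigma^2=0$ direction is correct as written. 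The paper itself gives no proof, only a citation, so there is no in-paper argument to compare against line by line.

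There is, however, a genuine gap that you flag yourself but do not close. The paper's entire point is that correlation decay is \emph{not} uniform in $\omega$: the standing hypotheses give only $\ln H\in L^1(\mathbb P)$, and the $L^p$ condition~\eqref{10:07} is placed on the \emph{induced} observable, not on the constants $K(\omega)$, $1/Q_\omega$, $\|\psi_\omega\|_{\al,\xi}$, $\|\psi_\omega\|_\infty$ from Lemma~\ref{ds}. Consequently your ``dominated convergence in $\ell$ with majorant $e^{-\lambda\ell}$'' is not justified: once you bound $\lvert g_\ell(\omega)\rvert\le e^{-\lambda\ell}\max(1,1/Q_\omega)K(\sigma^\ell\omega)\|\psi_\omega\|_{\al,\xi}\|\psi_{\sigma^\ell\omega}\|_\infty$, you still need (i) the random prefactor in $L^1(\mathbb P)$ for the fixed-$\ell$ Birkhoff theorem, and (ii) a bound on $\tfrac1n\sum_j g_\ell(\sigma^j\omega)$ summable in $\ell$ uniformly in $n$, and neither is granted by the paper's assumptions. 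The same unproved integrability sits under the converse (you use $\Psi\in L^2(\mu)$ and $\sum_\ell\ell\lvert\int\Psi\cdot(\Psi\circ F^\ell)\,d\mu\rvert<\infty$). Under the paper's hypotheses the natural route is the one its own Section~\ref{Main} follows: establish the statement first for the \emph{induced} system on the set $E$ of~\eqref{8:10}, where $K$, $1/Q$, $\|h\|_\infty$, $\|1/h\|_{\al,\xi}$ are uniformly bounded by $C_0$ so the Gordin/martingale argument of \cite{DFGTV1} goes through verbatim, and then transfer back via Kac's lemma together with Lemma~\ref{ae}. Your proof skips the inducing step and so leaves the integrability question open.
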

From now on we shall assume that $\Sigma^2>0$.  For any integer $L\geq 1$ 
consider the set
\[
A_L=\left\{\om\in\Om:\,\frac1n \mathbb E_\om\bigg{(}\sum_{k=0}^{n-1}\psi_{\sigma^k \om}\circ f_\omega^k \bigg{)}^2\geq\frac12\Sigma^2, \,\,\,\,\,\forall n\geq L\right\}.
\]
Then $A_{L}\subset A_{L'}$ if $L\leq L'$ and 
 the union of the $A_L$'s has probability $1$.
Due to measurability of $Q_\om,C(\om)$, $K(\om)$, and $\om \mapsto h_\om$, for any   $C_0 >0$ and $L\in \mathbb N$ the set 
\begin{equation}\label{8:10}
E:=\{ \om \in  \Omega : \max \{C(\om), K(\om), \lVert h_\om\rVert_ {\infty}, \lVert 1/h_\om \rVert_{\al,\xi},1/Q_\om\} \le C_0 \}\cap A_L
\end{equation}
is measurable, and when $C_0$ and $L$ are sufficiently large we have that 
$\mathbb P(E)>0$. Fix some large enough $C_0$ and $L$, and for $\om \in \Omega$, let   
\[
m_1(\om):=\inf \{n\in \mathbb N: \sigma^n \om\in E\}.
\]
For $k>1$ we inductively define
\[
m_k(\om):=\inf \{n >m_{k-1} (\om): \sigma^n \om \in E\}.
\]
Due to ergodicity of $\mathbb P$, we have that 
$m_k(\om)$ is well-defined for $\mathbb P$-a.e. $\om \in \Omega$ and every $k\in \mathbb N$. Let us consider the associated induced system $(E, \mathcal F_E, \mathbb P_E, \iota)$, where
$\mathcal F_E=\{A\cap E: A\in \mathcal F\}$, $\mathbb P_E(A)=\frac{\mathbb P(A)}{\mathbb P(E)}$, $A\in \mathcal F_E$ and $\iota (\om)=\sigma^{m_1(\om)}\om$ for $\om \in E$. We recall that $\mathbb P_E$ is invariant for $\iota$ and in fact ergodic. 

It follows from Birkhoff's ergodic theorem that 
\begin{equation}\label{Kn lim}
\lim_{n\to \infty} \frac{k_n(\om)}{n}=\mathbb P(E) \quad \text{for $\mathbb P$-a.e. $\om \in \Omega$,}
\end{equation}
where
\[
k_n(\om):=\max \{k\in \mathbb N: m_k(\om)\le n\}.
\]
Moreover, Kac's lemma implies that 
\[
\lim_{n\to \infty} \frac{m_n(\om)}{n}=\frac{1}{\mathbb P(E)}, \quad \text{for $\mathbb P$-a.e. $\om \in \Om$.}
\]
By combining the last two equalities, we conclude  that
\[
\lim_{n\to \infty}\frac{m_{k_n(\om)}(\om)}{n}=1, \quad \text{for $\mathbb P$-a.e. $\om \in \Om$.}
\]
For $\mathbb P$ a.e. $\om \in \Om$, set
\[
\Psi_\om:=\sum_{j=0}^{m_1(\om)-1}\psi_{\sigma^j \om}\circ f_\om^j. 
\]
We assume that there exists $p\geq 6$, so that
\begin{equation}\label{10:07}
\text{the map $\om \mapsto A(\om):=\lVert \Psi_\om \rVert_\infty$ belongs to $L^p (\Om, \mathcal F, \mathbb P)$.}
\end{equation}
Finally, let $L_\om:=\cL_\om^{m_1(\om)}$ and $F_\om:=f_\om^{m_1(\om)}$, for $\om \in \Omega$.

We are now in a position to state the main result of our paper (recall our assumption that $\Sig^2>0$).
\begin{theorem}\label{main}
For $\mathbb P$-a.e. $\omega \in \Omega$ and arbitrary $\del>0$, there exists a coupling between  $(\psi_{\sigma^i\omega} \circ f_\omega^i)_{i}$, considered as a sequence of random variables on $(\cE_\om,\mu_\om)$,
and a  sequence $(Z_k)_k$ of independent centered (i.e. of zero mean) Gaussian random variables
such that
 \begin{equation}\label{sca}
\bigg{\lvert} \sum_{i=1}^n  \psi_{\sigma^i\omega} \circ f_\omega^i-\sum_{i=1}^n Z_i \bigg{\rvert}=o(n^{a_p+\del}), \quad \text{a.s.},
\end{equation}   
where \[a_p=\frac{p}{4(p-1)}+\frac 1p. \]
Moreover, there exists $C=C(\om)>0$ so that for any $n\geq1$,
\begin{equation}\label{Var est Application}
\Big\|\sum_{i=1}^n \psi_{\sigma^i\omega} \circ f_\omega^i\Big\|_{L^2}-Cn^{a_p+\delta}\leq \Big\|\sum_{i=1}^n Z_i\Big\|_{L^2}\leq \Big\|\sum_{i=1}^n \psi_{\sigma^i\omega} \circ f_\omega^i\Big\|_{L^2}+Cn^{a_p+\delta}.
\end{equation} 
Finally, there exists a coupling between  $(\psi_{\sigma^i\omega} \circ f_\omega^i)_i$ and a standard Brownian motion $(W_t)_{t\ge 0}$ such that 
\[
\left|\sum_{i=1}^{n}\psi_{\sigma^i\omega} \circ f_\omega^i-W_{\sigma_{\om, n}^2}\right|=o(n^{\frac12 a_p+\frac14+\delta}) \quad a.s., 
\]
where
\[
\sigma_{\om,n}=\Big\|\sum_{i=1}^n\psi_{\sigma^i\omega} \circ f_\omega^i \Big\|_{L^2}.
\]

\end{theorem}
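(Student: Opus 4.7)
The plan is to apply Theorem~\ref{Gouzel Thm} to the induced process obtained by summing the observable $\psi_{\sigma^i\omega}\circ f_\omega^i$ over blocks delimited by the return times $m_k(\omega)$ to $E$, and then to transfer the resulting almost sure invariance principle from the induced sums back to the original Birkhoff sums. Without loss of generality I assume $\omega\in E$ (shifting $\omega$ by $\sigma^{m_1(\omega)}$ only changes $S_n:=\sum_{i=0}^{n-1}\psi_{\sigma^i\omega}\circ f_\omega^i$ by a bounded quantity, hence by a term $o(n^{a_p+\delta})$), set $m_0(\omega):=0$, and define the block variables
\[
A_k := \Psi_{\iota^{k-1}\omega}\circ F_\omega^{k-1} = \sum_{i=m_{k-1}(\omega)}^{m_k(\omega)-1}\psi_{\sigma^i\omega}\circ f_\omega^i
\]
viewed on the probability space $(\cE_\omega,\mu_\omega)$. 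Equivariance~\eqref{8:11} and the standing centring assumption on $\psi$ ensure each $A_k$ has zero mean under $\mu_\omega$.

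The first step is to verify the hypotheses of Theorem~\ref{Gouzel Thm} for $(A_k)$. For~\eqref{Go1}, I push a window of $m$ consecutive $A_j$'s forward by the appropriate iterate of $f_\omega$ so that the resulting Birkhoff sum of length $m_{j+m}-m_j\geq m$ starts from a fibre whose base $\sigma^{m_j}\omega$ lies in $E\subset A_L$; the definition of $A_L$ then gives a variance at least $\tfrac{1}{2}\Sigma^2 m$ once $m\geq L$. For~\eqref{Go2}, I combine assumption~\eqref{10:07} with Birkhoff's theorem applied to $A^p\in L^1(\mathbb P)$ on $(\Omega,\sigma)$ to obtain $A(\iota^{k-1}\omega)^p=O(k)$, whence $\|A_k\|_{L^p(\mu_\omega)}\leq A(\iota^{k-1}\omega)\leq C_\omega k^{1/p}$; the same argument applied to the rescaled sequence $(A_k/(n+m)^{1/p})$ shows the finite-sequence version of~(H) uses identical constants. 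The subtlest point is condition~(H) itself: I express the joint characteristic function as an integral against $\mu_\omega$ of a composition of perturbed transfer operators $\cL^{it_j}_{\,\cdot\,}$ on the original fibres. Lemma~\ref{L-Y-general} together with the uniform bound~\eqref{552n}, applied block by block and using that all return base points lie in $E$ (so $K,C,\|h\|_\infty,\|1/h\|_{\al,\xi},1/Q$ are bounded by $C_0$ along the returns), furnishes the polynomial factor $(1+\max|b_{j+1}-b_j|)^{C(n+m)}$, while the exponential factor $e^{-ck}$ in the gap arises from applying Lemma~\ref{ds} to the centred Hölder function produced by the first $n$ blocks, once we observe that a gap of $k$ blocks corresponds to at least $k$ applications of $\hat\cL$ along returns into $E$.

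Having verified the hypotheses, Theorem~\ref{Gouzel Thm} furnishes independent centred Gaussians $(B_k)$ coupled to $(A_k)$ with $\bigl|\sum_{k=1}^n A_k - \sum_{k=1}^n B_k\bigr| = o(n^{a_p+\delta})$ almost surely. Writing $n = m_{k_n(\omega)}(\omega)+r_n$ with $0\leq r_n < m_{k_n(\omega)+1}(\omega)-m_{k_n(\omega)}(\omega)$, the remainder $\sum_{i=m_{k_n(\omega)}(\omega)}^{n-1}\psi_{\sigma^i\omega}\circ f_\omega^i$ is bounded pointwise by $A(\iota^{k_n(\omega)}\omega) = o(n^{1/p})$, which is absorbed into the error term. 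I then refine the Gaussian sum $\sum_{k=1}^{k_n(\omega)} B_k$ to the original scale by splitting each $B_k$ into $m_k(\omega)-m_{k-1}(\omega)$ independent centred Gaussian summands (on an enlarged probability space) whose variances add up to $\var(B_k)$; calling these summands $Z_i$ and using $k_n(\omega)/n\to\mathbb P(E)$ from~\eqref{Kn lim} together with the analogous tail truncation gives~\eqref{sca}. The variance estimate~\eqref{Var est Application} follows from~\eqref{Var est} applied to the induced process, Minkowski's inequality, and the $L^2$ control of the remainder; the Brownian motion coupling is then deduced from~\eqref{sca} and~\eqref{Var est Application} by the same Strassen--Dudley argument used at the end of the proof of Theorem~\ref{Gouzel Thm}.

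The principal obstacle is the verification of~(H) for the induced process: the decay supplied by Lemma~\ref{ds} must be translated into clean exponential decay \emph{in the number of intervening induced blocks} rather than in the intervening raw time, while simultaneously the polynomial prefactor must depend only on the maximum block length. This is where the uniform bounds built into the definition~\eqref{8:10} of $E$ become indispensable: they guarantee that the random constants $K(\sigma^{m_j}\omega)$, $C(\sigma^{m_j}\omega)$, and $Q_{\sigma^{m_j}\omega}^{\pm 1}$ remain bounded by $C_0$ along the sequence of returns, so that iterated applications of Lemma~\ref{L-Y-general} and Lemma~\ref{ds} combine into estimates of the exact form required by~(H), uniformly in the block configuration.
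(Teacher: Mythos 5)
Your proposal follows essentially the same route as the paper: induce on the set $E$ of~\eqref{8:10}, verify the hypotheses of Theorem~\ref{Gouzel Thm} for the block process $A_k=\Psi_{\iota^{k-1}\omega}\circ F_\omega^{k-1}$ by expressing characteristic functions through the normalized transfer operators $L_\omega^{\overline{t},n}$ and using the uniform bound~\eqref{552n} together with the decay of Lemma~\ref{ds}, and then transfer the coupling back using the $O(n^{1/p})$ sup-norm control of the boundary blocks (the paper's Lemma~\ref{ae}), equation~\eqref{Kn lim}, and the Berkes--Philipp lemma. One minor inaccuracy worth noting: because the operator norms along returns to $E$ are uniformly bounded (the paper's~\eqref{8:37}), the estimate on the difference of characteristic functions is simply $Ce^{-\lambda k}$, so the polynomial prefactor $(1+\max\lvert b_{j+1}-b_j\rvert)^{C(n+m)}$ that you attribute to iterating Lemma~\ref{L-Y-general} block by block is not actually needed; this is harmless since a constant bound trivially satisfies condition~(H).
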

\begin{remark}
Observe that $a_p \to \frac 1 4 $ as $p\to \infty$. We note that our proof also yields convergence rate when $4<p<6$, which has a slightly less attractive form in terms of $p$.  In addition, we emphasize that $\Big\|\sum_{i=1}^n Z_i\Big\|_{L^2}$ depends on $\omega$ but that it is asymptotically deterministic. More precisely, it follows from~\eqref{827} and~\eqref{Var est Application} that 
\[
\lim_{n\to \infty} \frac{\Big\|\sum_{i=1}^n Z_i\Big\|_{L^2}^2}{n\Sigma^2}=1.
\]
\end{remark}

\begin{proof}[Proof of Theorem~\ref{main}]
Our strategy  proceeds as follows. Firstly, we will apply Theorem~\ref{Gouzel Thm} to establish the invariance principle for the induced system. Secondly, we extend the invariance principle to our original system.  Throughout the proof, $C>0$ will denote a generic constant independent on $\om$ and other parameters involved in the estimates. 

For $\omega  \in E$ (recall that $E$ is given by~\eqref{8:10}), set $A_n=\Psi_{\iota^n \om}\circ F_\om^n$, $n\in \mathbb N$. Obviously, $A_n$ depends also on $\om$ but in order to make the notation as simple as possible, we do not make this dependence explicit.

Observe that it follows from~\eqref{10:07} and Birkhoff's ergodic theorem that there exists a random variable $R\colon E \to (0, \infty)$ such that:
\begin{equation}\label{8:23}
\lVert A_n\rVert_{L^p} \le R(\om) n^{1/p} \quad \text{for $\mathbb P$-a.e. $\om \in E$ and $n\in \N$.}
\end{equation}
It follows easily from~\eqref{8:11} and~\eqref{8:10} that for any $k\in \mathbb N$, $n\ge L$ and $\om \in E$, 
\begin{equation}\label{8:24}
\frac 1 n  Var \bigg{(}\sum_{j=0}^{n-1} A_{j+k}\bigg{)} \ge \frac{1}{2}\Sig^2,
\end{equation}
where we have used that $m_n(\iota^k (\om)) \ge n$.
We conclude  from~\eqref{8:23} and~\eqref{8:24} that the processes $(A_n)_{n\in \N}$ satisfies~\eqref{Go2} and~\eqref{Go1}, respectively. 

Hence, in order to apply Theorem~\ref{Gouzel Thm}, we need to show that $(A_n)_{n\in \N}$ satisfies property~(H) and,  in addition,  that for any $n<m$ the finite sequence $(A_i/(n+m)^{1/p})_{n+1\le i \le n+m}$ also satisfies~(H) (with uniform constants). In fact, we will prove the following: the process $(a_nA_n)_{n\in \N}$ satisfies~(H) for any sequence $(a_n)_{n\in \N} \subset (0, 1]$ (and with uniform constants). Let us begin by introducing some
auxiliary notations. For $\mathbb P$ a.e. $\om \in \Omega$ and $z\in \mathbb C$, let
\[
\hat{\cL}_\om^zg:=\hat{\cL}_\om (ge^{z\psi_\om})=\cL_\om (ge^{z\psi_\om}h_\om)/h_{\sigma \om}, \quad  \text{for $g\in \cH_\om^{\al,\xi}$.}
\]
Furthermore, for $z\in \mathbb C$ and $n\in \N$, set
\[
\hat{\cL}_\om^{z, n}:=\hat{\cL}_{\sigma^{n-1}\om}^z \circ \ldots \circ  \hat{\cL}_\om^z.
\]
It is easy to verify that
\[
\hat{\cL}_\om^{z, n} g=       \cL_\om^n (ge^{zS_n^\om \psi}h_\om)/h_{\sigma^n \om}=\cL_\om^{z, n}(gh_\om)/h_{\sigma^n \om}.
\]
Finally, for $\om \in \Omega$,  $n\in \N$ and $\overline{t}=(t_0, t_1, \ldots, t_{n-1})\in \mathbb R^n$, let 
\[
L_\om^{\overline{t}, n}=     \hat{\cL}_{\iota^{n-1} \om}^{it_{n-1}, m_n(\om)-m_{n-1}(\om)}\circ  \ldots \circ             \hat{\cL}_{\iota \om}^{it_1, m_2(\om)-m_1(\om)} \circ   \hat{\cL}_\om^{it_0, m_1(\om)}.
\]
Observe that 
\[
L_\omega^{\overline{t},n}g=(\cL_{\iota^{n-1}\om}^{it_{n-1},m_n(\om)-m_{n-1}(\om)}\circ\ldots \circ\cL_{\iota \om}^{it_1,m_2(\om)-m_1(\om)}\circ\cL_{\om}^{it_0,m_1(\om)})(gh_\om)/h_{\iota^{n}\om},
\]
for any $g\in \cH_\om^{\al,\xi}$. It follows from~\eqref{552n}, \eqref{8:10} and the above formula that  for $n\in \N$ and 
$\overline t \in [-1, 1]^n$, we have that
\begin{equation}\label{8:37}
\lVert L_\omega^{\overline{t},n} \rVert_{\alpha,\xi} \le C.
\end{equation}
For $\om \in \Omega$ and $g\in  \cH_\om^{\al,\xi}$, set
\[
\Pi_\om g:=\bigg{(} \int_{\cE_\om}g\, d\mu_\om \bigg{)}\mathbf{1}
\]
where $\textbf{1}$ denotes the function which takes the constant value $1$, regardless of the space on which it is defined.
Since $L_\omega^{\overline{0},k}=\hat{\mathcal L}_\om^{m_k(\om)}$ and $m_k(\om)\ge k$, it follows from Lemma~\ref{ds} and~\eqref{8:10}  that 
\begin{equation}\label{5:59}
\lVert ( L_\omega^{\overline{0},k}-\Pi_\om )g \rVert_\infty  \le Ce^{-\lambda k}\lVert g\rVert_{\al,\xi}, 
\end{equation}
for $\om \in E$, $g\in \cH_\om^{\al,\xi}$ and $k\in \N$. 

Take now $n,m, k\in \N$, $b_1<b_2< \ldots <b_{n+m+k}$  and $t_1,\ldots ,t_{n+m}\in\bbR$ with $|t_j|\leq 1$. We have that 
\[
\begin{split}
& \bbE_{\mu_\om}\big(e^{i\sum_{j=1}^nt_j(\sum_{\ell=b_j}^{b_{j+1}-1}B_\ell)+i\sum_{j=n+1}^{n+m}t_j(\sum_{\ell=b_j+k}^{b_{j+1}+k-1}B_\ell)}\big)\\
&=\bbE_{\mu_{\iota^{b_{n+m+1}+k}\om)}}\big(  L_{\iota^{b_{n+1}+k}\om}^{\overline{t}, b_{n+m+1}-b_{n+1}}  L_{\iota^{b_{n+1}}\om}^{\overline{0}, k}  L_{\iota^{b_1}\om}^{\overline{s}, b_{n+1}-b_1} \mathbf 1 \big),\\
\end{split}
\]
where $B_n=a_nA_n$,
\[
\overline{s}=(a_{b_1}t_1, \ldots, a_{b_2-1}t_1, a_{b_2}t_2, \ldots, a_{b_3-1}t_2, \ldots, a_{b_n}t_n, \ldots, a_{b_{n+1}-1}t_n), 
\]
and
\[
\overline{t}=(a_{b_{n+1}+k}t_{n+1}, \ldots, a_{b_{n+2}+k-1}t_{n+1}, \ldots, a_{b_{n+m}+k}t_{n+m}, \ldots, a_{b_{n+m+1}+k-1}t_{n+m}).
\]
Consequently, 
\[
\begin{split}
& \bbE_{\mu_\om}\big(e^{i\sum_{j=1}^nt_j(\sum_{\ell=b_j}^{b_{j+1}-1}B_\ell)+i\sum_{j=n+1}^{n+m}t_j(\sum_{\ell=b_j+k}^{b_{j+1}+k-1}B_\ell)}\big)\\
&=\bbE_{\mu_{\iota^{b_{n+m+1}+k}\om}}(  L_{\iota^{b_{n+1}+k}\om}^{\overline{t}, b_{n+m+1}-b_{n+1}} \big(  L_{\iota^{b_{n+1}}\om}^{\overline{0}, k} -\Pi_{\iota^{b_{n+1}}\om}) L_{\iota^{b_1}\om}^{\overline{s}, b_{n+1}-b_1} \mathbf 1 \big) \\
&\phantom{=}+\bbE_{\mu_{\iota^{b_{n+m+1}+k}\om}}\big(  L_{\iota^{b_{n+1}+k}\om}^{\overline{t}, b_{n+m+1}-b_{n+1}} \Pi_{\iota^{b_{n+1}}\om} L_{\iota^{b_1}\om}^{\overline{s}, b_{n+1}-b_1} \mathbf 1 \big) \\
&=:I_1+I_2.
\end{split}
\]
We claim next that 
\begin{equation}\label{I1 claim}
\lvert I_1\rvert \le Ce^{-\lambda k}.
\end{equation}
Indeed, set 
\[
A:=L_{\iota^{b_{n+1}+k}\om}^{\overline{t}, b_{n+m+1}-b_{n+1}},\,\,
B:=L_{\iota^{b_{n+1}}\om}^{\overline{0}, k} -\Pi_{\iota^{b_{n+1}}\om}
\,\,\text{ and }\,\,g:=L_{\iota^{b_1}\om}^{\overline{s}, b_{n+1}-b_1} \mathbf 1.
\]
Then, 
\[
\|A\|_\infty:=\sup_{f:\|f\|_\infty=1}\|Af\|_\infty\leq \|L_{\iota^{b_{n+1}+k}\om}^{\overline{0}, b_{n+m+1}-b_{n+1}}\textbf{1}\|_\infty=\|\textbf 1\|_\infty=1,
\]
and therefore
\[
|I_1|\leq \|A(Bg)\|_\infty\leq\|A\|_\infty \cdot \|Bg\|_\infty\leq\|Bg\|_\infty.
\]
Applying~\eqref{8:37} we have
\[
\|g\|_{\al,\xi}\leq C,
\]
and thus it follows from~\eqref{5:59}  that 
\[
|I_1|\leq\|Bg\|_\infty\leq Ce^{-\la k}.
\]
We conclude that  (\ref{I1 claim})  holds.

 On the other hand,
\[
I_2=\mathbb E_\om  \big{(}e^{i\sum_{j=1}^nt_j(\sum_{\ell=b_j}^{b_{j+1}-1}B_\ell)}\big{)} \cdot \mathbb E_\om \big{(}e^{i\sum_{j=n+1}^{n+m}t_j(\sum_{\ell=b_j+k}^{b_{j+1}+k-1}B_\ell)}\big{)}.
\]
We conclude that the process $(B_n)_{n\in \N}$ satisfies property~(H) with constants that do not depend on the sequence $(a_l)$.   Thus, Theorem~\ref{Gouzel Thm} yields the almost sure invariance principle for the process $(\Psi_{\iota^n \om}\circ F_\om^n)_{n\in \N}$. 

It remains to observe that the conclusion of the Theorem \ref{main} now follows from the Berkes–Philipp lemma (see \cite[Lemma A.1]{BP} or \cite[Lemma 3.1]{GO}) and the
following lemma which  together with (\ref{Kn lim}), ensures that (\ref{Var est Application}) holds true.
\begin{lemma}\label{ae}
There exists a random variable $U\colon \Omega \to (0, \infty)$ such that
\[
\bigg{\lVert}\sum_{j=0}^{n-1}\psi_{\sigma^j \om}\circ f_\om^j -\sum_{j=0}^{k_n(\om)-1} \Psi_{\iota^j \om} \circ F_\om^j \bigg{\rVert}_\infty \le U(\om)n^{1/p},
\]
for $\mathbb P$-a.e. $\om \in \Omega$ and $n\in \N$.
\end{lemma}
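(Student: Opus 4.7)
The plan is to rewrite the induced Birkhoff sum as an ordinary Birkhoff sum up to a return time, identify the discrepancy as a single ``partial block,'' and then control its sup norm using the hypothesis $A\in L^p$ via Borel--Cantelli.

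\textbf{Telescoping.} Unfolding the definitions of $\iota$, $F_\om=f_\om^{m_1(\om)}$ and $\Psi_\om=\sum_{\ell=0}^{m_1(\om)-1}\psi_{\sig^\ell\om}\circ f_\om^\ell$, a direct induction on $j$ gives, with the convention $m_0(\om)=0$,
\[
\Psi_{\iota^j\om}\circ F_\om^j=\sum_{i=m_j(\om)}^{m_{j+1}(\om)-1}\psi_{\sig^i\om}\circ f_\om^i,
\]
and hence $\sum_{j=0}^{k_n(\om)-1}\Psi_{\iota^j\om}\circ F_\om^j=\sum_{i=0}^{m_{k_n(\om)}(\om)-1}\psi_{\sig^i\om}\circ f_\om^i$. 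Thus the quantity whose sup norm we must bound reduces to the leftover partial block
\[
D_n(\om):=\sum_{i=m_{k_n(\om)}(\om)}^{n-1}\psi_{\sig^i\om}\circ f_\om^i,
\]
consisting of $n-m_{k_n(\om)}(\om)<m_1(\iota^{k_n(\om)}\om)$ consecutive terms sitting inside the $(k_n(\om)+1)$-th induced block.

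\textbf{Head minus tail.} Next I would write $D_n(\om)=\Psi_{\iota^{k_n(\om)}\om}\circ F_\om^{k_n(\om)}-T_n(\om)$, where $T_n(\om):=\sum_{i=n}^{m_{k_n(\om)+1}(\om)-1}\psi_{\sig^i\om}\circ f_\om^i$ is the missing tail of the block. The elementary identity $m_1(\sig^r\om')=m_1(\om')-r$ for $0\le r<m_1(\om')$, applied at $\om'=\iota^{k_n(\om)}\om$ with $r=n-m_{k_n(\om)}(\om)$, yields $m_{k_n(\om)+1}(\om)-n=m_1(\sig^n\om)$; changing the summation index to $s=i-n$ and conjugating by $f_\om^n$ then identifies $T_n(\om)=\Psi_{\sig^n\om}\circ f_\om^n$. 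Consequently
\[
\|D_n(\om)\|_\infty\le A(\iota^{k_n(\om)}\om)+A(\sig^n\om).
\]

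\textbf{Borel--Cantelli step.} Since $A\in L^p(\mathbb P)$, one has $\sum_{n\ge 1}\mathbb P(A\circ\sig^n>n^{1/p})=\sum_{n\ge 1}\mathbb P(A^p>n)\le \mathbb E(A^p)<\infty$, so by Borel--Cantelli there is a random variable $U_1\colon\Om\to(0,\infty)$ with $A(\sig^n\om)\le U_1(\om) n^{1/p}$ for all $n$ and $\mathbb P$-a.e.\ $\om$. The same argument applied to $A|_E\in L^p(\mathbb P_E)$ and the ergodic induced map $\iota$ yields a random $U_2$ with $A(\iota^k\om)\le U_2(\om)k^{1/p}$. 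Since $k_n(\om)\le n$, the lemma follows on $E$ with $U:=U_1+U_2$, and the statement on all of $\Om$ follows by $\sig$-invariance (absorbing the finite initial excursion before the first hit of $E$ into $U(\om)$).

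The main obstacle is the bookkeeping in the telescoping step together with the identification $T_n(\om)=\Psi_{\sig^n\om}\circ f_\om^n$; both are elementary but demand care with the return-time indices. Once these are in place, the $L^p$-Borel--Cantelli step is standard.
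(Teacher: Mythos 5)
Your proof is correct and follows essentially the same route as the paper's: both reduce the difference to the leftover partial block via telescoping, express it as $\Psi_{\iota^{k_n(\om)}\om}\circ F_\om^{k_n(\om)}-\Psi_{\sigma^n\om}\circ f_\om^n$, and then bound the two resulting sup-norms using the $L^p$ integrability of $A$. The only cosmetic difference is in the last step, where you invoke Borel--Cantelli while the paper cites Birkhoff's ergodic theorem together with \eqref{Kn lim}; these are interchangeable standard devices for producing the a.s.\ $O(n^{1/p})$ bound on $A\circ\sigma^n$ and $A\circ\iota^{k_n(\om)}$.
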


\begin{proof}[Proof of the lemma]
If $n=m_{k_n(\om)}(\om)$ then there is nothing to prove, and so we assume that $m_{k_n(\om)}(\om)<n$.
Observe that
\begin{eqnarray*}
\sum_{j=0}^{n-1}\psi_{\sigma^j \om}\circ f_\om^j -\sum_{j=0}^{k_n(\om)-1} \Psi_{\iota^j \om} \circ F_\om^j=\sum_{j=m_{k_n(\om)}(\om)}^{n-1}\psi_{\sigma^j \om}\circ f_\om^j\\=
\sum_{j=m_{k_n(\om)}(\om)}^{m_{k_n(\om)+1}(\om)-1}\psi_{\sigma^j \om}\circ f_\om^j
 -\sum_{j=n}^{m_{k_n(\om)+1}(\om)-1}\psi_{\sigma^j \om}\circ f_\om^j\\=\Psi_{\sigma^{k_n(\om)}\om}-\Psi_{\sigma^n\om}
\end{eqnarray*}
and thus 
\[
\bigg{\lVert}\sum_{j=0}^{n-1}\psi_{\sigma^j \om}\circ f_\om^j -\sum_{j=0}^{k_n(\om)-1} \Psi_{\iota^j \om} \circ F_\om^j \bigg{\rVert}_\infty \le \lVert \Psi_{\sigma^{k_n(\om)}\om}\rVert_\infty+ \lVert\Psi_{\sigma^{n}\om}\rVert_\infty,
\]
where we have used that $\sigma^j\om\notin E$ when $m_{k_n(\om)}(\om)<j<m_{k_n(\om)+1}(\om)$.
Hence, the conclusion of the lemma follows directly from Birkhoff's ergodic theorem, (\ref{Kn lim}) and~\eqref{10:07}.
\end{proof}

\end{proof}

\section{Acknowledgements}
We would like to thank the anonymous referee for his/hers constructive and illuminating comments that helped us to improve our paper.   
D. D. was supported in part by Croatian
Science Foundation under the project IP-2019-04-1239 and by the University of
Rijeka under the projects uniri-prirod-18-9 and uniri-prprirod-19-16.

\end{document}